\newtheorem{thm}{Theorem}[section]
\newtheorem{cor}[thm]{Corollary}
\newtheorem{lem}[thm]{Lemma}
\newtheorem{rmk}[]{Remark}%[section]
\newcommand{\wto}{\to\kern-1.22em\raise.9ex
                  \hbox{\small\rm w}\hskip.7em} % weakly tends to
\newcommand{\wwto}{\to\kern-1.2em\raise.9ex
                  \hbox{\small\rm w}\hskip.7em} % weakly tends to
\journal{arXiv}
\numberwithin{equation}{section}
\def\<{\langle}
\def\>{\rangle}
\def\p{\partial}
\def\e{\epsilon}
\def\ve{\varepsilon}
\def\l{\langle}
\def\r{\rangle}
\def\i{\mathbf i}
\def\be{\begin{equation}}
\def\ee{\end{equation}}
\def\ba{\begin{array}}
\def\ea{\end{array}}
\begin{document}

\begin{frontmatter}

\title{KAM theorem with large perturbation and application to network of Duffing oscillators}
%\tnotetext[label1]{ }

%%%---------------------------
\author{Xiaoping Yuan\corref{cor}}
\ead{xpyuan@fudan.edu.cn}
\author{Lu Chen}
\ead{chenlu@zju.edu.cn}
%\address{School of Mathematical Sciences, Fudan University, Shanghai 200433, P.R. China}
\author{Jing Li}
\ead{xlijing@sdu.edu.cn}
%\address{School of Mathematics and Statistics, Shandong University, Weihai 264209, Shandong, P.R. China}
\cortext[cor]{Corresponding author.}
%投稿时加上\address{$^1$School of Mathematics and Statistics, Shandong University, Weihai 264209, Shandong, P.R. China}
%\稿时加上address{$^2$School of Mathematical Sciences, Fudan University, Shanghai 200433, P.R. China}
%\address{School of Mathematics and Statistics, Shandong University, Weihai 264209, Shandong, P.R. China}
%\address[sdu]{School of Mathematics, Shandong University, Jinan 250100, PR China}
%%%%-------------------------------------------------------------------------------

\begin{abstract}
We prove that there is an invariant torus with given Diophantine frequency vector for  a class of Hamiltonian systems defined by an integrable  large Hamiltonian function with a large non-autonomous Hamiltonian perturbation. As for application, we prove that a finite network of Duffing oscillators with periodic exterior forces possesses Lagrangian stability for almost all initial data.

\end{abstract}

\begin{keyword}
 KAM theorem;  Hamiltonian system; invariant torus; Lagrangian stability
 \MSC[2010] Primary 34L15;\ Secondary 34B10, 47E05.
\end{keyword}
\end{frontmatter}
%%
%% Start line numbering here if you want
%%
%%\linenumbers

%% main text
%%%%%%%%%%%%%%%%%%%%%%%%
\section{Introduction}                                                   \label{Introduction}
%%%%%%%%%%%%%%%%%%%%%%%%

It is well-known that the forced harmonic oscillator
\begin{equation}\label{eq1-1}
\frac{d^{2}x}{dt^{2}}+\alpha x=k\sin \omega t,\;\alpha>0
\end{equation}
has a periodic solution
$$
\frac{k}{\alpha-\omega^{2}}\sin \omega t,\;\;(\alpha\neq \omega^{2}).
$$
When $\alpha=\omega^{2},$ resonance happens. Duffing \cite{Duffing} introduced a nonlinear oscillator with a cubic stiffness term to
describe the hardening spring effect observed in many mechanical problems
\begin{equation}\label{eq1-2}
\frac{d^{2}x}{dt^{2}}+\alpha x+\beta x^{2}+\gamma x^{3}=k \sin \omega t,\;\;\alpha>0,\;\gamma<0.
\end{equation}
Moon-Holmes \cite{Moon-Holmes1979,Moon-Holmes1980} showed that the equation of the form
\begin{equation}\label{eq1-3}
\frac{d^{2}x}{dt^{2}}+\delta\frac{dx}{dt}-x+x^{3}=k \cos \omega t
\end{equation}
provides the simplest possible model for the forced vibrations of a cantilever beam in the nonuniform field of two permanent
magnets. Such kind of equations are now called Duffing equations (oscillators). After proving his famous twist theorem as a part of
KAM theory, Moser \cite{Moser1962} proposed the study of the Lagrangian stability
for Duffing equation
\begin{equation}\label{eq1-3+}
\frac{d^{2}x}{dt^{2}}+\alpha x+\beta x^{3}=p(t),\;\;\alpha\in \mathbb{R},\;\beta>0,
\end{equation}
where $p(t)$ is a periodic function. The Lagrangian stability means that all solutions exist globally and are bounded in some norm for all time. The proof of the stability for \eqref{eq1-3+} with $\alpha=0, \beta=2$ was completed by Morris \cite{Morris}. Subsequently, the Lagrangian stability result was extended
by Dieckerhoff-Zehnder \cite{Dieckerhoff1987} to a wider class of systems
%------------------------------------------------------------------------
\begin{equation}\label{eq1-4}
\frac{d^{2}x}{dt^{2}}+x^{2n+1}+\sum_{j=0}^{2n}p_{j}(t)x^{j}=0,\;\;n\geq 1,
\end{equation}
%-------------------------------------------------------------------------
where $p_{j}(t)$'s are smooth enough and  of period, say, $2\pi$. See \cite{LeviCMP,Liu1989,Liu1992,Yuan1995,Yuan1998} for more details.

In many research fields such as physics, mechanics and mathematical biology as so on arise networks of coupled Duffing oscillators of various form.
 For example, the evolution equations for
the voltage variables $V_1$ and $V_2$ obtained using the Kirchhoff's
voltage law are
 \begin{equation}\label{eq1-14}
\left\{ \begin{array}{ll}
  R^2 C^2 \frac{d^2 V_1}{d t^2}=-(\frac{R^2 C}{R_1})\frac{d V_1}{dt}-(\frac{R}{R_2})V_1-(\frac{R}{100 R_3}) V_1^3+(\frac{R}{R_C})V_2+f \sin \omega t,\\
    R^2 C^2 \frac{d^2 V_2}{d t^2}=-(\frac{R^2 C}{R_1})\frac{d V_2}{dt}-(\frac{R}{R_2})V_2-(\frac{R}{100 R_3}) V_2^3+(\frac{R}{R_C})V_1 \end{array}
\right.
\end{equation}
 where $R$'s and $C$'s are resistors and capacitors, respectively.  This equation can be regarded as one  coupled by two Duffing oscillators. See  \cite{Jothimurugan2016,Kiss,Kovaleva,Clerc2018,Sarkar,Chatterjee,Shena,Sarma} for more details.

In the present paper, we concern the Lagrangian stability for coupled Hamiltonian system of $m$ Duffing oscillators: %$\ddot{x}+x^{2n+1}=0:$
%--------------------------------------------------------------
\begin{equation}\label{eq1-15}
\ddot{x_{i}}+x_{i}^{2n+1}+\frac{\partial F}{\partial x_{i}}=0,\;\;i=1, 2, \cdots, m,
\end{equation}
%--------------------------------------------------------------
where the polynomial potential $F=F(x, t)=\sum_{\alpha\in \mathbb{N}^{m}, |\alpha|\leq 2n+1}p_{\alpha}(t)x^{\alpha},$ $x\in \mathbb{R}^{m}$ with $p_{\alpha}(t)$ is of period $2 \pi$, and $n$ is a given natural number. When $m=1$, \eqref{eq1-15} is reduced to \eqref{eq1-4}. Note that \eqref{eq1-15} is a Hamiltonian system with freedom $m+1/2$ where time takes $1/2$-freedom. In generally speaking, the study of the stability of \eqref{eq1-15} with $m>1$ is a difficult task. In order to see this point, let us recall the method used by Dieckerhoff-Zehnder \cite{Dieckerhoff1987} for the case $m=1$.  The key ingredient in \cite{Dieckerhoff1987} is
the Moser's twist theorem \cite{Moser1962}  which shows that the twist mapping
%----------------------------------------------------------------------
\begin{equation}\label{eq1-5}
P_{\varepsilon}: \left\{
                     \begin{array}{ll}
                       \rho_{1}=\rho+\varepsilon g(\rho, \theta), & \rho\in [a, b] ,\\
                       \theta_{1}=\theta+\alpha(\rho)+\varepsilon f(\rho, \theta), & \theta\in \mathbb{T}=\mathbb{R}/2 \pi
                     \end{array}
                   \right.
\end{equation}
%-----------------------------------------------------------------------
possesses an invariant curve with rotational frequency $\omega=\alpha(\rho_{0})\in$ D.C. (D. C. means Diophantine conditions) provided that
$\parallel f\parallel_{C^{333}},$ $\parallel g\parallel_{C^{333}}\lesssim 1$, $0<\ve \ll 1$ and $P_{\varepsilon}$ is symplectic or preserving-intersection.
($C^{333}$ can be relaxed to $C^{3+\mu}$ with $\mu>0$. See R\"{u}ssman \cite{Russmann}.) Observe that the mapping $P_\ve$ is an integrable one attached by a small perturbation $\ve (f,g)$.
Although $\ddot{x}+x^{2n+1}=0$ is indeed
integrable,  the perturbation $\sum_{j=0}^{2n}p_{j}(t)x^{j}$ is not small at all in \eqref{eq1-4}. An important observation by Dieckerhoff-Zehnder \cite{Dieckerhoff1987} is that the perturbation $\sum_{j=0}^{2n}p_{j}(t)x^{j}$ is relatively small with respect to the integrable part $x^{2n+1}$ in the neighborhood of the infinity.
 More exactly, one can write \eqref{eq1-4} as a Hamiltonian equation with Hamiltonian $H$:
$$H=H_{0}(I)+R(I, \theta, t),\;\;(I,\theta,t)\in \mathbb {R}\times \mathbb {T}\times \mathbb {T},$$ where $(I, \theta)$ are action-angle variables, and
%-----------------------------------------------------------------------------------
\begin{equation}\label{eq1-6-}
H_{0}(I)\sim I^{\frac{2n+2}{n+2}},\; R(I, \theta, t)\sim I^{\frac{2n+1}{n+2}}.
\end{equation}
%---------------------------------------------------------------------------------
Note
\begin{equation}\label{eq1-6}
\lim_{I\rightarrow \infty} (H_{0}(I))^{-1} R(I, \theta, t)=0.
\end{equation}
If the integrable Hamiltonian $H_0$ and the perturbation $R$ obey \eqref{eq1-6}, we call that $R$  is
relatively small with respect to  $H_0$ in the neighborhood of the infinity.
Then by a series of symplectic transformations, the relatively small $R$ can be changed into a truly small perturbation.
The symplectic transformations can be implicitly defined by
%--------------------------------------------------------------
\begin{equation}\label{eq1-7}
\Psi: \left\{
        \begin{array}{ll}
          I=\mu+\frac{\partial S}{\partial \theta}, \\
          \phi=\theta+\frac{\partial S}{\partial \mu},
        \end{array}
      \right.
\end{equation}
%-----------------------------------------------------------
where $S=S(\mu, \theta, t)$ is generating function which obeys the homological equation
\begin{equation}\label{eq1-8}
H'_{0}(\mu)\frac{\partial S}{\partial \theta}+R(\mu, \theta, t)=[R](\mu, t),
\end{equation}
where $[R](\mu, t)=\frac{1}{2\pi}\int_{0}^{2\pi}R(\mu, \theta, t)d\theta.$ When $m=1$, the homological equation is a scalar equation. Thus,
%--------------------------------------------------------------
\begin{equation}\label{eq1-9}
S=\int_{0}^{\theta}\frac{[R](\mu, t)-R(\mu, \theta, t)}{H'_{0}(\mu)}d\theta.
\end{equation}
%------------------------------------------------------------
By \eqref{eq1-6-}, we have
%-----------------------------------------------------------
\begin{equation}\label{eq1-10}
S\sim I^{\frac{n+1}{n+2}}.
\end{equation}
%----------------------------------------------------------
Now the perturbation $R$ is changed into  $R_{1}$:
%-------------------------------------------------------------------------------
\begin{equation}\label{eq1-11}
R_{1}=\frac{\partial S}{\partial t}+\int_{0}^{1}(1-\tau)H''_{0}(\mu+\tau\frac{\partial S}{\partial \theta})(\frac{\partial S}{\partial \theta})^{2}d\tau
\sim O(I^{\frac{n+1}{n+2}})+O(I^{\frac{2n}{n+2}})=O(I^{\frac{2n}{n+2}}).
\end{equation}
%------------------------------------------------------------------------
Repeat the procedure as the above $\nu$ times with $\nu> 1+\frac{4}{n}+\log_{2}{n}$.  Then $R$ is changed into
%------------------------------------------------------
\begin{equation}\label{eq1-12}
R_{\nu}=O(I^{-\frac{c}{n+2}}),\;\;c>0.
\end{equation}
%-----------------------------------------------------
Now $R_{\nu}$ is small when $|I|\rightarrow +\infty.$ By Moser's twist theorem, one can prove that there are many invariant cylinder of time period $2\pi$ around $\infty$
in the extended phase $(x, \dot{x}, t).$ The solution $(x(t),\dot x(t))$ with the initial data $(x(0),\dot x(0))$ in the cylinder is confined in the cylinder.
%--------------------------------------------------------
Thus, the Duffing oscillator \eqref{eq1-4} has Lagrange stability
\begin{equation}\label{eq1-13}
\sup_{t\in \mathbb{R}}(|x(t)|+|\dot{x}(t)|)<\infty.
\end{equation}
%-------------------------------------------------------------------------------------------------
%%%%%%%%%%%%%%%%%%%%%%%%%%%%%%%%%%%%%%%%%%%%%%

When $m\ge 2,$ equation \eqref{eq1-9} does not hold any more. In order to see this clearly, passing \eqref{eq1-8} to Fourier coefficients of $S$ and $R,$ we have
%------------------------------------------------------------------
\begin{equation}\label{eq1-16}
\widehat{S}(\mu, k, t)= \frac{{\mathbf i}}{ \langle \p_{\mu}\, H_0(\mu), k\rangle}\widehat{R}(\mu, k, t),\;\;k\in \mathbb{Z}^{m}\setminus \{0 \}, \, {\mathbf i}^2=-1,
\end{equation}
%----------------------------------------------------------------
where $\widehat{S}(\mu, k, t)$ and $\widehat{R}(\mu, k, t)$ are Fourier coefficients of $S(\mu, \cdot, t)$ and $R(\mu, \cdot, t),$ respectively. Note that $0$ is in the closure of the set $\{\langle \p_{\mu}\, H_0(\mu), k\rangle:\;  k\in \mathbb{Z}^{m}\setminus \{0 \}\}$, that is, $\langle \p_{\mu}\, H_0(\mu), k\rangle$'s are the notorious small divisors. Thus the estimate \eqref{eq1-10} does not hold when $m\ge 2$.

 In the present paper, we directly construct a KAM theorem to deal Hamiltonian system with large perturbation.

In order to state the theorem, we need some notations. By $C$ (or $c$) denote a universal positive  constant which may be different in different places. When those constants $C$ and $c$ are necessarily distinguished, denote them by $C_0,C_1,c_0,c_1$, etc.  Let positive integer $d$ be the freedom of the to-be considered Hamiltonian.  Let $[1,2]^{d}$ be the product of $d$ intervals $[1,2]$ and $\mathbb T^{d+1}=\mathbb R^{d+1}/(2\pi \mathbb Z)^{d+1}$. Denote by $\ve>0$ a small constant which measures the size of Hamiltonian functions.

\begin{thm}\label{thm01} Consider a Hamiltonian $H=\ve^{-a}H_0(I)+\ve^{-b} R(\theta,t,I)$ where $a,b$ are given positive constants with $a>b$, and $H_0$ and $R$ obey the following conditions:
\begin{enumerate}
\item $H_0$ and $R(\theta,t,I)$ are real analytic functions in the domain $\mathbb T^{d+1}\times [1,2]^d$ and
\[||H_0||:=\sup_{I\in[1,2]^d}|H_0(I)|\le c_1, \; ||R||:=\sup_{(\theta,t,I)\in\mathbb T^{d+1}\times [1,2]^d}|R(\theta,t,I)|\le c_2\]
\item $H_0$ is non-degenerate in Kolmogorov's sense:
\[\text{det}\,\left(\p_I^2\, H_0(I) \right)\ge c_3>0,\forall\; I\in [1,2]^d.\] \end{enumerate}
Then there exists $\e^*=\e^*(a,b,d, c_1,c_2,c_3)\ll 1$ such that for any $\ve$ with $|\ve|<\e^*$,  the Hamiltonian system
$$\dot \theta=\frac{\p H(\theta,t,I)}{\p I},\;\dot I=-\frac{\p H(\theta,t,I)}{\p \theta}$$ possesses a $d+1$ dimensional invariant torus of rotational frequency vector $(\omega(I_0),2\pi)$ with $\omega(I):=\frac{\p H_0(I)}{\p I}$, for any $I_0\in  [1,2]^d$  and $\omega(I_0)$ obeying   Diophantine conditions:
    \[|\ve^{-a}\l k,\omega(I_0)\r +l|\ge \frac{\ve^{-a}\gamma}{|k|^{d+1}}>\frac{\gamma}{|k|^{d+1}},\; \gamma=(\log\frac{1}{\ve})^{-2C_1}, k\in\mathbb Z^d\setminus\{0\},l\in\mathbb Z, |k|+|l|\le (\log\frac{1}{\ve})^{C_1},\] and
     \[|\ve^{-a}\l k,\omega(I_0)\r +l|\ge \frac{\gamma}{(1+|k|)^{d+1}},\;  (k,l)\in\mathbb Z^d\times \mathbb Z, |k|+|l|> (\log\frac{1}{\ve})^{C_1}.\]
\end{thm}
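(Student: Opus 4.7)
My plan is to perform a Newton-type KAM iteration in an extended phase space, using the smallness of the ratio $\ve^{a-b}=\ve^{-b}/\ve^{-a}$ (small since $a>b$) as the effective convergence parameter, even though the perturbation $\ve^{-b}R$ is itself unbounded as $\ve\to 0$. Introducing a dummy conjugate $\eta$ for time, I view
\[\calH=\ve^{-a}H_0(I)+\eta+\ve^{-b}R(\theta,t,I)\]
as an autonomous Hamiltonian, real-analytic on a complex strip $|\im\theta|,|\im t|\le s_0$ around $\mathbb T^{d+1}$ and a complex ball around $I_0\in[1,2]^d$. The target is a symplectic conjugacy to Kolmogorov normal form at $I_0$ with the prescribed rotation vector.

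At step $n$ write $\calH_n=\ve^{-a}H_0^{(n)}(I)+\eta+\ve^{-b}R_n(\theta,t,I)$, Fourier-truncate $R_n$ at level $N_n\ge(\log 1/\ve)^{C_1}$, and solve the homological equation $\ve^{-a}\omega^{(n)}(I)\cdot\p_\theta S_n+\p_t S_n=\ve^{-b}(R_n-[R_n])^{<N_n}$ for a generating function $S_n$. In Fourier, $\hat S_n(k,l,I)=-\i\ve^{-b}\hat R_n(k,l,I)/(\ve^{-a}\l k,\omega^{(n)}(I)\r+l)$. For low modes $|k|+|l|\le(\log 1/\ve)^{C_1}$ the strong Diophantine bound of the theorem yields
\[|\hat S_n(k,l,I)|\lesssim \ve^{a-b}\gamma^{-1}|k|^{d+1}|\hat R_n(k,l,I)|,\]
so every low Fourier coefficient of $S_n$ carries the decisive factor $\ve^{a-b}$; this is the key point that converts a nominally large perturbation into a KAM-scale one. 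For higher modes the Diophantine bound is weaker ($\gamma/(1+|k|)^{d+1}$ rather than $\ve^{-a}\gamma/|k|^{d+1}$), but the analytic decay $|\hat R_n(k,l,I)|\le\|R_n\|_{s_n}e^{-(|k|+|l|)s_n}$ combined with the cutoff threshold $(\log 1/\ve)^{C_1}$ makes this contribution smaller than any prescribed power of $\ve$ once $C_1$ is taken large enough. Altogether $\|S_n\|_{s_n-\sigma_n}\lesssim\ve^{a-b}\gamma^{-1}\sigma_n^{-(2d+2)}\|R_n\|_{s_n}$, which is genuinely small.

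Applying the time-1 symplectic flow $\Phi_{S_n}$ absorbs $\ve^{-b}[R_n]$ into a new integrable part $H_0^{(n+1)}=H_0^{(n)}+\ve^{a-b}[R_n]$ and leaves a new perturbation $\ve^{-b}R_{n+1}$ with quadratic bound $\|R_{n+1}\|_{s_{n+1}}\lesssim\ve^{a-b}\gamma^{-1}\sigma_n^{-(2d+3)}\|R_n\|_{s_n}^2$ plus an exponentially small tail. With the standard choices $\sigma_n=s_0 2^{-n-1}$ and $N_n$ polynomial in $n$, the Newton-type recursion gives the super-exponential decay $\|R_n\|\lesssim(\ve^{a-b}\gamma^{-1}c_2)^{2^n-1}c_2\to 0$ provided $\ve<\ve^*$. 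Since $H_0^{(n+1)}-H_0^{(n)}=\ve^{a-b}[R_n]$ is summable, $\omega^{(n)}$ drifts by only $O(\ve^{a-b})$; using Kolmogorov non-degeneracy $\det\p_I^2H_0\ge c_3>0$, an implicit-function adjustment of the base point $I_0^{(n)}$ at each step keeps the target frequency pinned at $\omega(I_0)$, so both Diophantine conditions persist throughout. The composition $\Phi_{S_0}\circ\Phi_{S_1}\circ\cdots$ then converges to a symplectic embedding whose image of $\mathbb T^{d+1}\times\{I_0^{(\infty)}\}$ is the desired invariant torus with rotation vector $(\omega(I_0),2\pi)$. The chief technical obstacle is coordinating the two Diophantine regimes with the Fourier cutoff: the strong low-mode bound must supply the $\ve^{a-b}$ gain that defeats the $\ve^{-b}$ nominal size, while the weak high-mode bound must be compensated by analyticity---this is precisely why the cutoff $(\log 1/\ve)^{C_1}$ and the polylogarithmic $\gamma=(\log 1/\ve)^{-2C_1}$ appear in the statement.
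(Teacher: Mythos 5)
Your outline captures the right high-level intuition (the factor $\ve^{a-b}$ is the real smallness parameter, Kolmogorov non-degeneracy lets you pin the frequency by shifting the base point, a Fourier cutoff at $(\log\frac1\ve)^{C_1}$ separates the two Diophantine regimes), but there is a genuine gap that the paper's proof is built precisely to avoid, and it already sinks the very first step of your iteration.

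The claimed generating-function estimate $\|S_n\|\lesssim\ve^{a-b}\gamma^{-1}\sigma_n^{-(2d+2)}\|R_n\|$ is false for the Fourier modes with $k=0$, $l\neq0$. The strong Diophantine bound $|\ve^{-a}\l k,\omega(I_0)\r+l|\ge\ve^{-a}\gamma/|k|^{d+1}$ in the statement is only asserted for $k\in\mathbb Z^d\setminus\{0\}$ — it makes no sense for $k=0$. For $k=0$, $l\neq0$ the divisor is just $l$, with $|l|\ge1$ and no $\ve^{-a}$ gain, so
\[
\widehat S_n(0,l,I)=-\tfrac{\i}{l}\,\ve^{-b}\widehat R_n(0,l,I),
\qquad\text{hence}\quad
\|S_n\|\gtrsim\ve^{-b}\|R_n\|,
\]
which for $n=0$ is of size $\ve^{-b}c_2$, \emph{large}, not small. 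In particular your time-$1$ flow $\Phi_{S_0}$ is not close to the identity, $\|S_0\|$ is nowhere near $\sigma_0\sim s_0$, so the conjugation is not well-defined on the analyticity strip and the Lie-series/Taylor expansion of $H\circ\Phi_{S_0}$ does not converge. Moreover, if you were to keep those modes unresolved instead, you would be left with a $t$-dependent, $\theta$-independent term which you cannot absorb into $H_0^{(n+1)}(I)$, and which then blocks the quadratic contraction you are invoking.

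The paper deals with exactly this obstruction by a two-stage structure that you should compare with your scheme. In the finite-step normal form (Lemma~\ref{lemma2.1}) the homological equation \eqref{eq19} has right-hand side $\widehat R(0,t,\rho)-R(\theta,t,\rho)$, i.e.\ it eliminates only the $k\neq0$ modes, so every divisor carries the strong bound and the generating function is genuinely $O(\ve^{(j+1)(a-b)})$. The $k=0$ modes are not touched; they accumulate in the angle-independent term $h^{(j)}(t,I)$ of size $O(\ve^{-b})$, and this is exactly why the iteration converges only linearly (see the term $\|h\|\,\|u\|\,\tau_j^{-1}$ in \eqref{yy55}), which is nonetheless enough because only finitely many steps are taken. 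Then the accumulated $t$-dependent term $h^{(m_0)}(t,I)$ is removed all at once by the transformation $\tilde\Psi$ after \eqref{0402-4}; crucially, $\tilde\Psi$ is a generating-function change $\phi=\theta+\p_I\tilde S(t,I)$ whose $\theta$-shift is large in real part but whose \emph{imaginary} part is $O(\ve^{b/2})$ on the shrunken domain $\mathcal D$ (since $\tilde S$ is real for real arguments and $\p^2\tilde S$ is controlled by Cauchy), so it still maps a small complex strip into a small complex strip even though it is far from the identity. Only after that one-shot removal — and after one Newton adjustment of the base point using Kolmogorov non-degeneracy — does the standard quadratic Kolmogorov iteration begin (Section~3), and there the perturbation has already been pushed to size $\ve^E$ with $E=A-9b-1$ huge, so the weak Diophantine bound $\gamma/(1+|k|)^{d+1}$, which has no $\ve^{-a}$ gain, is sufficient.

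So the fix you would need is not cosmetic: either leave the $k=0$ modes out of the homological equation and carry a side term $h_n(t,I)$ (accepting only linear convergence, and therefore bounding the number of steps and then discharging $h$ by a separate non-small transformation), or confront the fact that the $k=0$ part of $\p_I S$ produces a large real shift of $\theta$ and show its imaginary part is nevertheless small on a suitably tiny complex domain. The paper does the first, and that structural decision — not just the observation that $\ve^{a-b}$ is small and the cutoff is $(\log\frac1\ve)^{C_1}$ — is the heart of the argument and is missing from your proposal.
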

Applying Theorem \ref{thm01} to \eqref{eq1-15} we have the following theorem.

\begin{thm}\label{thm02} The equation \eqref{eq1-15} is almost Lagrangian stable, that is, for almost all initial data $(x_j(0),\dot x_j(0): j=1,...,m)$, the solution $(x_j(t),\dot x_j(t): j=1,...,m)$ exists globally and obeys
\be\label{Lag2}\sup_{t\in\mathbb R}\sum_{i=1}^m |x_i(t)|+|\dot x_i(t)|<C\ee
where $C$ depends on the initial data $(x_j(0),\dot x_j(0): j=1,...,m)$.
\end{thm}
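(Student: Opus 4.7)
Equation \eqref{eq1-15} is Hamilton's equations for the time-periodic Hamiltonian
$$H(x,y,t)=\sum_{i=1}^m\Bigl(\tfrac{y_i^2}{2}+\tfrac{x_i^{2n+2}}{2n+2}\Bigr)+F(x,t),\qquad y_i=\dot x_i,$$
whose integrable part decouples into $m$ independent planar oscillators, each admitting real-analytic action-angle coordinates $(I_i,\theta_i)\in(0,\infty)\times\mathbb T$. A classical computation gives $H_{\mathrm{int}}(I)=c_n\sum_i I_i^{(2n+2)/(n+2)}$ and $x_i=c_n'I_i^{1/(n+2)}X(\theta_i)$ for a real-analytic $2\pi$-periodic function $X$. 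Hence in $(I,\theta,t)\in(0,\infty)^m\times\mathbb T^{m+1}$ the full Hamiltonian reads $H(I,\theta,t)=H_{\mathrm{int}}(I)+R(I,\theta,t)$, where $R$, polynomial in the $x_i$'s of degree at most $2n+1$, satisfies $|R(I,\theta,t)|\le C|I|^{(2n+1)/(n+2)}$ for $|I|$ bounded away from $0$, exactly as in \eqref{eq1-6-}.

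\textbf{Step 2: Rescaling at large action and application of Theorem \ref{thm01}.} Fix $\lambda\gg 1$ and put $I=\lambda J$ with $J\in[1,2]^m$. Since $dI\wedge d\theta=\lambda\,dJ\wedge d\theta$, the flow with respect to the standard form is generated by
$$H_{\mathrm{eff}}(J,\theta,t):=\tfrac{1}{\lambda}H(\lambda J,\theta,t)=c_n\lambda^{n/(n+2)}\widetilde H_0(J)+\lambda^{(n-1)/(n+2)}\widetilde R_\lambda(J,\theta,t),$$
with $\widetilde H_0(J)=\sum_i J_i^{(2n+2)/(n+2)}$ and $\widetilde R_\lambda(J,\theta,t):=\lambda^{-(2n+1)/(n+2)}R(\lambda J,\theta,t)$ uniformly bounded in $\lambda$, since the lower-degree terms of $F$ contribute additional factors $O(\lambda^{-1/(n+2)})$. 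Setting $\ve=\lambda^{-1}$, $a=n/(n+2)$, $b=(n-1)/(n+2)$ casts $H_{\mathrm{eff}}$ into the form $\ve^{-a}H_0+\ve^{-b}R$ with $a>b>0$ for $n\ge 2$; the case $n=1$ reduces to classical small-perturbation KAM after dividing by $\lambda^{1/3}$. The Hessian $\p^2_J\widetilde H_0(J)$ is diagonal with positive entries proportional to $J_i^{-2/(n+2)}$ and therefore has determinant bounded away from zero on $[1,2]^m$, supplying the Kolmogorov non-degeneracy, while analyticity and sup-norm bounds are immediate. Theorem \ref{thm01} then produces, for every $J_0\in[1,2]^m$ whose frequency $\omega(J_0)=\p_J(c_n\widetilde H_0)$ satisfies the stated Diophantine conditions, an $(m+1)$-dimensional invariant torus carrying quasi-periodic motion.

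\textbf{Step 3: Measure argument, conclusion, and principal obstacle.} Undoing the rescaling, each such torus sits inside the shell $I\in[\lambda,2\lambda]^m$ of the original action coordinates and, in $(x,\dot x,t)$ space, becomes a compact invariant $(m+1)$-torus supporting quasi-periodic and hence bounded motion. Since $J\mapsto\omega(J)$ is a diffeomorphism of $[1,2]^m$ by Kolmogorov non-degeneracy, the measure of non-Diophantine actions is $O(\gamma)=O((\log\lambda)^{-2C_1})$, and this bound transfers to the relative Lebesgue measure of the shell not covered by invariant tori. Taking the dyadic sequence $\lambda_k=2^k$ and unioning the resulting Cantor families over $k$ produces a set of initial data in $\mathbb R^{2m}$ whose orbits are globally bounded and whose complement has Lebesgue density zero at infinity, yielding \eqref{Lag2} for almost every initial datum. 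The principal difficulty is this final measure step: for $m\ge 2$ an $(m+1)$-torus does not topologically separate the $(2m+1)$-dimensional extended phase space, so one cannot conclude boundedness by trapping orbits between neighbouring tori as in the one-oscillator argument of \cite{Dieckerhoff1987}; instead one must show that the KAM set itself exhausts phase space up to Lebesgue density zero, which requires quantitative summable control of the bad measures $O((\log\lambda_k)^{-2C_1})$ across all dyadic scales, using precisely the $\gamma=(\log\tfrac1\ve)^{-2C_1}$ provided by Theorem \ref{thm01}. A secondary technical point is to verify uniform analyticity of $\widetilde R_\lambda$ on a common complex strip in $(J,\theta,t)$ independent of $\lambda$, which relies on the real-analyticity of the action-angle map away from the origin and on the polynomial structure of $F$.
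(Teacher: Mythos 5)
Your proposal follows essentially the same route as the paper: both introduce a large scaling parameter to put the system in the form $\ve^{-a}H_0(I)+\ve^{-b}R(\theta,t,I)$ on $[1,2]^m\times\mathbb T^{m+1}$ and then invoke Theorem~\ref{thm01} together with the measure estimate of Remark~\ref{RMK2} to get invariant tori through a set of actions of relative measure $1-O((\log\frac1\ve)^{-C})$. The only cosmetic difference is the order of operations — you pass to action-angle coordinates first and then rescale the action $I=\lambda J$ (getting $a=\tfrac{n}{n+2}$, $b=\tfrac{n-1}{n+2}$, $\ve=\lambda^{-1}$), whereas the paper first rescales $x_j\mapsto\widetilde A x_j$ and then takes action-angle variables (getting $a=n$, $b=n-1$, $\ve=\widetilde A^{-1}$; these are equivalent under $\lambda=\widetilde A^{n+2}$) — and your Step~3 observation that $(m+1)$-tori cannot confine orbits in the $(2m+1)$-dimensional extended phase space when $m\ge2$, so that the conclusion is a density-one-at-infinity statement for the KAM set rather than a trapping argument, is a correct and useful clarification of what the paper actually establishes.
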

\begin{rmk} Actually we can prove that  the solutions to  \eqref{eq1-15} are time quasi-periodic for almost all large initial data. See $\S$ \ref{section5}.
\end{rmk}
\begin{rmk} Theorems \ref{thm01} and \ref{thm02} hold true for the reversible systems, respectively. We do not pursue this end here.
\end{rmk}
\begin{rmk}\label{RMK2} Let $\varTheta=\{I_0\in[1,2]^d:\; \omega(I_0) \,\text{obeys the Diophantine conditions}\}$. We claim that the Lebesgue measure of $\varTheta$ approaches to $1$:
\[\text{Leb} \varTheta\ge 1-C (\log\frac{1}{\ve})^{-C}\to 1,\;\text{as}\; \ve\to 0.\]
Let
\[\tilde\Theta_{k,l}=\left\{\xi\in\omega([1,2]^d):\; |\ve^{-a}\l k,\xi\r +l|\le \frac{\ve^{-a}\gamma}{|k|^{d+1}},\; 0\neq k\in\mathbb Z^d,\, l\in \mathbb Z, |k|+|l|\le (\log\frac{1}{\ve})^{C_1}\right\}\] and
\[\tilde\Theta_{k,l}=\left\{\xi\in\omega([1,2]^d):\; |\ve^{-a}\l k,\xi\r +l|\le \frac{\gamma}{(1+|k|)^{d+1}},\; (k,l)\in\mathbb Z^d\times \mathbb Z, |k|+|l|> (\log\frac{1}{\ve})^{C_1}\right\}.\] Note that there exists a direction $k(\xi)$ such that
\[\p_{k(\xi)}(\ve^{-a}\l k,\xi\r +l)=\ve^{-a}|k|,\; k=(k_1,...,k_d)\in\mathbb Z^d\setminus\{0\}.\]
Thus,
\[\text{Leb}\tilde\Theta_{k,l}\le C \gamma/|k|^{d+2},\; k\neq 0, |k|+|l|\le (\log\frac1{\ve})^{C_1},\]
\[\text{Leb}\tilde\Theta_{k,l}\le C \ve^{a}\gamma/|k|^{d+2},\; k\neq 0, |k|+|l|>(\log\frac1{\ve})^{C_1}.\]
 Also note that $\tilde\Theta_{k,l}=\emptyset$ when $k=0,l\neq 0$ and when $|l|>1+\ve^{-a}|k|\sup_{I\in[1,2]^d}|\omega(I)|$. Therefore,
\[\text{Leb}\,\left(\bigcup_{(0,0)\neq (k,l)\in\mathbb Z^{d+1}}\tilde\Theta_{k,l}\right)\le C \gamma (\log\frac{1}{\ve})^{C_1}\le C(\log\frac{1}{\ve})^{-C_1}.\] Let $\Theta=[1,2]^d\setminus \left(\bigcup_{(0,0)\neq (k,l)\in\mathbb Z^{d+1}}\omega^{-1}(\tilde\Theta_{k,l})\right)$. By the Kolmogorov's non-degenerate condition, the map $\omega:[1,2]^d\to \omega([1,2]^d)$ is a diffeomorphism in both direction. Then the proof of the claim is completed by letting  $\Theta=[1,2]^d\setminus \left(\bigcup_{(k,l)\in\mathbb Z^{d+1}\setminus\{0,0\}}\omega^{-1}(\tilde\Theta_{k,l})\right)$.
\end{rmk}

%%%%%%%%%%%%%%%%%%%%%%%%%%%%%%%%%%%%%%%%%%%%%%%%%%%%%%%%%%%%%%%%%%%%%%%%%%%
\section{Normal Form}
By the compactness of $\mathbb T^{d+1}\times [1,2]^d$, there is a constant $s_0>0$  such that the Hamiltonian functions $H_0(I), R(\theta,t,I)$ are analytic in the complex neighborhood $\mathbb T^{d+1}_{2s_0}\times [1,2]_{2s_0}^d$ of  $\mathbb T^{d+1}\times [1,2]^d$ and are real for real arguments where
\[\mathbb T^{d+1}_{2s_0}\times [1,2]_{2s_0}^d=\{\phi\in\mathbb C^{d+1}/(2\pi\mathbb Z)^{d+1}: \; |\Im \phi|\le 2s_0\}
\times\{z\in\mathbb C^d:\;\text{dist}_{\mathbb C}\,(z,\mathbb [1,2]^d)\le 2s_0 \}.\] We will call that a function of complex variables is real analytic if it is analytic on the complex variables and real for the real arguments. Thus $H_0$ and $R$ are real analytic and we can assume
%------------
\be\label{yy1}||\p_I^\alpha H_0||_{2s_0}=\sup_{I\in [1,2]^d_{2s_0}}|\p_I^\alpha H_0(I)|\le C,
||\p^\alpha_{I}\p^\beta_{\phi}\, R(\phi,I)||_{2s_0}=\sup_{(\phi,I)\in\mathbb T^{d+1}_{2s_0}\times [1,2]_{2s_0}^d}
|\p^\alpha_{I}\p^\beta_{\phi} R(\phi,I)|\le C,\,|\alpha|+|\beta|\le 2.\ee
%-----------
Now take $I_0\in [1,2]^d$ such that $\omega(I_0)=\p_I H_0(I_0)$ obeys D.C.:
%----------
\be\label{yy2}|\ve^{-a}\l k, \omega(I_0)\r+l|\ge \frac{\ve^{-a}\gamma}{|k|^{d+1}},\; |k|+|l|\le {(\log\frac{1}{\ve})^{C_1}},\,
k\in\mathbb Z^{d}\setminus\{0\},\, l\in\mathbb Z.\ee
%-----------
Take a constant $C_2$ with $C_2>5(d+1)C_1$ and define a neighborhood of $I_0$
\[B_0=\{I=I_0+z:\; z\in\mathbb C^d,\, |z|\le (\log\frac{1}{\ve})^{-C_2}\}.\]
%-----------
By \eqref{yy1} and \eqref{yy2}, we have that for $|k|+|l|\le (\log\frac{1}{\ve})^{C_1},\,
k\in\mathbb Z^{d}\setminus\{0\},\, l\in\mathbb Z,\, I\in B_0$, the frequency $\omega(I)$ obeys
%--------
\be\label{yy3} |\ve^{-a}\l k, \omega(I)\r+l|\ge |\ve^{-a}\l k, \omega(I_0)\r+l|-\ve^{-a}|\omega(I)-\omega(I_0)||k|
 \ge \frac{\ve^{-a}\gamma}{2|k|^{d+1}}. \ee
%------------------
Let $\Omega_0=\frac12\p^2_IH_0(I_0)$. By the Kolmogorov's non-degeneracy, we have that $\Omega_0$ is invertible matrix and
\be\label{yy4} ||\Omega_0||\le C,\; ||\Omega^{-1}_0||\le C.\ee

%%%%%--------------------------
\iffalse%%%%%%%%%%%%%%%%
Define $\parallel f(x, I)\parallel_{s_{0}, r_{0}}:=\sup _{x\in \mathbb{{T}}^{d+1}_{s_{0}}, I\in [1, 2]^{d}_{r_{0}}}|R(x, I)|$
Assumption
$$\parallel R\parallel_{s_{0}, r_{0}}\leq C,\;\;
\frac{1}{c}\leq \frac{\partial^{2}H_{0}}{\partial I^{2}}\leq c,\;\;I\in [1, 2]^{d}.$$
\fi%%%%%%%%%
Introduce a truncation operator $\Gamma=\Gamma_{K}$ depending on $K>0$ as follows:
For any function $f:\mathbb{T}^{d+1}\rightarrow \mathbb{C}$  (or $\mathbb{C}^{l}, l\geq 1$), write $f(x)=\sum_{k\in \mathbb{Z}^{d+1}}\widehat{f}(k)e^{i\langle k, x\rangle}$ and define function $(\Gamma_{K}f)(x)$:
 $$(\Gamma_{K}f)(x)=\sum_{|k|\leq K}\widehat{f}(k)e^{\i\langle k, x\rangle}.$$
Now consider $H(\theta, t, I)=\varepsilon^{-a}H_0(I)+\varepsilon ^{-b}R(\theta, t, I),$
where $(\theta, t)\in\mathbb{{T}}^{d+1}_{2s_{0}}$ and $I\in B_0$. Then $H_0(I)$ and
$R(\theta, t, I)$ are real analytic on $\mathbb{{T}}^{d+1}_{2s_{0}}\times B_0$.
Take $x=(\theta, t).$ Decompose $R(\theta, t, I)=\Gamma_{K}R+(1-\Gamma_{K})R,$
where $$(\Gamma_{K}R)(x, I)=\sum_{|k|+|l|\leq K}\widehat{R}(k, l, I)e^{\i(\langle k, \theta\rangle+l\,t)},\;\;(k, l)\in \mathbb{Z}^{d}\times \mathbb{Z}.$$
Let $A=200d(a+b)$ and $m_0=2+\left[\frac{b+A}{a-b}\right]$ where $[\cdot]$ is the integer part of a positive number.
 Define sequences
\begin{itemize}
\item $ s_j=2^{-j}s_0,\; s_j^{l}=s_{j}-\frac{l}{10}(s_j-s_{j+1}),l=0,1,...,10, j=0,1,2,...,m_0;$
\item $K_j=\frac{A}{s_j-s_j^1}\log\frac{1}{\ve},\;j=0,1,2,...,m_0;$
\item $\tau_0=(\log\frac{1}{\ve})^{-C_2},\, \tau_j=2^{-j}\tau_0,\tau_j^{l}=\tau_{j}-\frac{l}{10}(\tau_j-\tau_{j+1}),l=0,1,...,10,\; j=0,1,...,m_0;$
\item $B(\tau_0)=B_0, B(\tau_j)=\{z\in\mathbb C^d: \, |z-I_0|\le \tau_j\},\;  j=0,1,...,m_0$. \end{itemize}
%--------------------------------

Let $C_1>1$. We have
\be\label{yy5}K_j\le K_{m_0}<(\log\frac{1}{\ve})^{C_1},\;j=0,1,2,...,m_0.\ee
%--------
For a function $f$ defined in $\mathbb T_{s}^{d+1}\times B(\tau)$ or $B(\tau)$ , define
\[||f||_{s,\tau}=\sup_{(\phi,I)\in\mathbb T_{s}^{d+1}\times B(\tau)}|f(\phi,I)|,\; \text{or}\; ||f||_{\tau}=\sup_{I\in B(\tau)}|f(I)|.\]
 By Cauchy's formula, we have
\begin{eqnarray*}
\parallel (1-\Gamma_{K_0})R\parallel_{s_{0}, \tau_{0}}&=&\parallel \sum_{|k|+|l|> K_{0}} \widehat{R}(k, l, I)e^{\i(\langle k, \theta\rangle+l\cdot t)}\parallel_{s_0,\tau_0}
\leq (\sum_{|k|+|l|> K_0} e^{-(2s_{0}-s_{0})(|k|+|l|)})\parallel R \parallel_{2s_{0}, 2s_0}\\
&\leq & \frac{C}{(s_0)^{d+1}}e^{-s_0\,K_{0}}\leq C\ve^{A},
\end{eqnarray*}
and
$$\parallel \Gamma_{K_0}R\parallel_{s_{0}, \tau_{0}}\leq C.$$
Let $$H^{(0)}=H,\;\;R^{(0)}=\varepsilon^{-b}\Gamma_{K_0}R(\theta, t, I),\;\;R^{(0)}_+=\ve^{-b}(1-\Gamma_{K_0})R(\theta, t, I).$$
Then
%----------------------------------------------------
\begin{equation}\label{eq10}
H^{(0)}=\varepsilon^{-a}H_{0}(I)+ R^{(0)}(\theta, t, I)+R^{(0)}_+(\theta, t, I),\;(\theta, t, I)\in \mathbb{T}^{d+1}_{s_0}\times B(\tau_0)
\end{equation}
where $R^{(0)}$, $R_{+}^{(0)}$ are real analytic in $\mathbb{T}^{d+1}_{s_{0}}\times B(\tau_0),$ and
%----------------------------------------------------------------
\begin{equation}\label{eq11}
\parallel R^{(0)}\parallel_{s_{0}, \tau_{0}}\leq C\ve^{-b}, \Gamma_{K_0}\, R^{(0)}\equiv  R^{(0)},
\end{equation}
%------------------------------------------------------------------
\begin{equation}\label{eq12}
\parallel R_+^{(0)}\parallel_{s_{0}, \tau_{0}}\leq C\ve^{A-b}.
\end{equation}
In order to simplify notation, the universal constants $C$'s in the following lemma may depend on $m_0,C_1,C_2$ , thus, on $a, b, A,C_1,C_2$.
%%%%%%%%%%------------------------------------------
\begin{lem}[Iterative lemma in finite steps]\label{lemma2.1}
Assume that we have a Hamiltonian
\be\label{yy7} H^{(j)}=\ve^{-a}H_0(I)+h^{(j)}(t,I)+R^{(j)}(\theta,t,I)+R^{(j)}_+(\theta,t,I),\, (\theta,t,I)\in\mathbb T^{d+1}_{s_j}\times B(\tau_j),
\,0\le j\le m_0-1,\ee
%---------
where
\begin{description}
\item[1(j)] The functions $h^{(j)}(t,I),R^{(j)}(\theta,t,I)$ and $R^{(j)}_+(\theta,t,I)$ are real analytic in $T^{d+1}_{s_j}\times B(\tau_j)$;
\item[2(j)]     \[ h^{(0)}\equiv0,\; ||h^{(j)}||_{s_j,\tau_j}\le C \ve^{-b}, 0\le j\le m_0-1;\]
\item[3(j)] \[||R^{(j)}||_{s_j,\tau_j}\le C \ve^{-b+j(a-b)}(\log\frac{1}{\ve})^{C},\; {\Gamma_{K_j}R^{(j)}=R^{(j)}};\]
\item[4(j)] \[||R^{(j)}_+||_{s_j,\tau_j}\le C \ve^{-b+A}(\log\frac{1}{\ve})^{C}.\]
\end{description}
%------------------
Then there exists a symplectic coordinate change
\be\label{yy8}\Psi_j:\;  T^{d+1}_{s_{j+1}}\times B(\tau_{j+1})\to T^{d+1}_{s_j}\times B(\tau_j)\ee
with
\be\label{yy9} ||\Psi_j-id||_{s_{j+1},\tau_{j+1}}\le C\ve^{(j+1)(a-b)} (\log\frac{1}{\ve})^{C}\ee
such that
\be\label{1110} H^{(j+1)}=H^{(j)}\circ \Psi_j=\ve^{-a}H_0(I)+h^{(j+1)}(t,I)+R^{(j+1)}(\theta,t,I)+R^{(j+1)}_+(\theta,t,I),\, (\theta,t,I)\in\mathbb T^{d+1}_{s_{j+1}}\times B(\tau_{j+1}),\ee
where the new Hamiltonian functions $h^{(j+1)}(t,I), R^{(j+1)}(\theta,t,I)$ and $R^{(j+1)}_+(\theta,t,I)$ obey the conditions {\bf 1(j+1)-4(j+1)}.
\end{lem}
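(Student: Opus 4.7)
The plan is to carry out one step of a Lie-transform KAM iteration with per-step gain of size $\ve^{a-b}$. I adjoin to $t$ the conjugate action $E$ so that $K^{(j)}:=E+H^{(j)}$ is autonomous, and write $H^{(j)}=H_0^{(j)}+R^{(j)}+R^{(j)}_+$ with $H_0^{(j)}(t,I):=\ve^{-a}H_0(I)+h^{(j)}(t,I)$. I seek a generating Hamiltonian $S^{(j)}(\theta,t,I)$ (independent of $E$) solving the main homological equation
\[
\ve^{-a}\l\omega(I),\p_\theta S^{(j)}\r+\p_t S^{(j)}=[R^{(j)}](t,I)-R^{(j)}(\theta,t,I),
\]
where $[R^{(j)}]$ denotes the $\theta$-average. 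Since $R^{(j)}=\Gamma_{K_j}R^{(j)}$ and $K_j\le(\log 1/\ve)^{C_1}$, the Diophantine bound \eqref{yy3} applies mode by mode and gives $|\ve^{-a}\l k,\omega(I)\r+l|\ge \ve^{-a}\gamma/(2|k|^{d+1})$ uniformly on $B(\tau_j)$ for every Fourier mode $(k,l)$ with $k\ne 0$, $|k|+|l|\le K_j$. Inverting and summing with the standard analytic-strip loss $s_j\to s_j^1$ produces
\[
\|S^{(j)}\|_{s_j^1,\tau_j}\le C\,\ve^{a}\gamma^{-1}K_j^{d+1}\|R^{(j)}\|_{s_j,\tau_j}\le C\,\ve^{(j+1)(a-b)}(\log 1/\ve)^{C}.
\]

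Next I take $\Psi_j$ to be the time-$1$ map of the Hamiltonian flow of $S^{(j)}$. Cauchy estimates on the successive shrinkings $s_j^1\to s_j^2$ and $\tau_j^1\to\tau_j^2$ bound $\p_\theta S^{(j)}$ and $\p_I S^{(j)}$ by $\|S^{(j)}\|$ times polylogarithmic factors, so $\Psi_j:\mathbb T^{d+1}_{s_{j+1}}\times B(\tau_{j+1})\to\mathbb T^{d+1}_{s_j}\times B(\tau_j)$ is well defined and satisfies \eqref{yy9}. Expanding $K^{(j)}\circ\Psi_j$ in a Lie series and using the main homological equation,
\[
H^{(j)}\circ\Psi_j=\ve^{-a}H_0(I)+h^{(j+1)}(t,I)+\widetilde R(\theta,t,I),
\]
where $h^{(j+1)}:=h^{(j)}+[R^{(j)}]$ and $\widetilde R$ is the sum of the cross-term $\l\p_I h^{(j)},\p_\theta S^{(j)}\r$, the Taylor integral remainder $\int_0^1(1-\tau)\{[R^{(j)}]-R^{(j)}+\l\p_I h^{(j)},\p_\theta S^{(j)}\r,S^{(j)}\}\circ\Phi^\tau_{S^{(j)}}\,d\tau$, the first-order bracket $\int_0^1\{R^{(j)}_+,S^{(j)}\}\circ\Phi^\tau_{S^{(j)}}\,d\tau$, and $R^{(j)}_+\circ\Psi_j$. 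Finally I split $\widetilde R$ via $\Gamma_{K_{j+1}}$ into $R^{(j+1)}:=\Gamma_{K_{j+1}}\widetilde R$ and $R^{(j+1)}_+:=(1-\Gamma_{K_{j+1}})\widetilde R$.

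The three remaining verifications 2(j+1)--4(j+1) are routine given the size of $S^{(j)}$: every summand in $\widetilde R$ apart from $R^{(j)}_+\circ\Psi_j$ carries an additional factor $\ve^a\gamma^{-1}K_j^{O(d)}$ from a Poisson bracket against $S^{(j)}$, and so has norm $\le C\ve^{-b+(j+1)(a-b)}(\log 1/\ve)^C$, giving 3(j+1); the truncation tail $(1-\Gamma_{K_{j+1}})\widetilde R$ picks up a further factor $e^{-K_{j+1}(s_{j+1}^1-s_{j+1})}\le \ve^A$ from the defining formula for $K_{j+1}$, which together with $R^{(j)}_+\circ\Psi_j$ (already of size $\ve^{A-b}(\log 1/\ve)^C$) yields 4(j+1); and telescoping the means, $h^{(j+1)}=\sum_{i\le j}[R^{(i)}]$, followed by geometric summation in $\ve^{a-b}$ gives $\|h^{(j+1)}\|\le C\ve^{-b}$, that is 2(j+1). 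The principal subtlety is that $R^{(j)}$ is never intrinsically small---its natural norm is $\ve^{-b}$---and the gain $\ve^{a-b}$ per step arises exclusively from the large factor $\ve^{-a}$ in the integrable part, since inversion against $\ve^{-a}\omega$ contributes a compensating $\ve^a$. Consequently every Cauchy loss in the $I$-direction (of order $\tau_j^{-1}\sim 2^j(\log 1/\ve)^{C_2}$) must be absorbed into the polylog prefactor $(\log 1/\ve)^C$, and the cross-term $\l\p_I h^{(j)},\p_\theta S^{(j)}\r$, which appears because the main homological equation ignores the $t$-dependent piece $\p_I h^{(j)}$ of the instantaneous frequency, must be shown to fall below the bound for 3(j+1)---which it does since $\|\p_I h^{(j)}\|\le C\ve^{-b}(\log 1/\ve)^{C_2}$ and $\|\p_\theta S^{(j)}\|\le CK_j\|S^{(j)}\|$, so their product is still $O(\ve^{-b+(j+1)(a-b)})$ up to polylogs.
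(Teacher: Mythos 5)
Your proof is correct in substance but takes a genuinely different transformation technology from the paper. The paper defines $\Psi_j$ implicitly via a classical generating function $S(\theta,t,\rho)$ through $I=\rho+\p_\theta S$, $\phi=\theta+\p_\rho S$, then invokes the analytic implicit function theorem to obtain the explicit form $I=\rho+u(\phi,t,\rho)$, $\theta=\phi+v(\phi,t,\rho)$ and Taylor-expands $H^{(j)}\circ\Psi_j$ with integral remainders \eqref{eq16}--\eqref{eq18}. You instead take $\Psi_j$ to be the time-$1$ Hamiltonian flow of $S^{(j)}$ (a Lie transform) and expand in a Lie series. Both routes arrive at the same homological equation $\p_t S+\ve^{-a}\l\omega(\rho),\p_\theta S\r=[R]-R$, use the same truncation and the same Diophantine bound \eqref{yy3}, and give the same estimate $\|S\|\lesssim\ve^{(j+1)(a-b)}(\log\frac1\ve)^C$; the cross-term $\l\p_I h^{(j)},\p_\theta S^{(j)}\r$ you isolate is precisely the paper's third integral term \eqref{eq18}, and your bound for it agrees. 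In short, the Lie-series version trades the implicit-function step for flow estimates and otherwise produces the same objects $h^{(j+1)}=h^{(j)}+[R^{(j)}]$ and the same new remainder.

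One bookkeeping slip should be corrected. As written, you absorb $R^{(j)}_+\circ\Psi_j$ (of size $\ve^{-b+A}(\log\frac1\ve)^C$) into $\widetilde R$ \emph{before} truncating and then set $R^{(j+1)}:=\Gamma_{K_{j+1}}\widetilde R$. But since $m_0(a-b)\ge a+A>A$, the later steps have $(j+1)(a-b)>A$, and for those $j$ the contribution $\Gamma_{K_{j+1}}(R^{(j)}_+\circ\Psi_j)\sim\ve^{-b+A}$ fails the target \textbf{3(j+1)} norm $\ve^{-b+(j+1)(a-b)}(\log\frac1\ve)^C$. The paper sidesteps this by routing $R_{**}=R^{(j)}_+\circ\Psi_j$ entirely into $R^{(j+1)}_+$ (cf.\ \eqref{yy60}) and truncating only the genuinely decaying part $R_*$. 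Your own remarks about \textbf{4(j+1)} (``the truncation tail \dots together with $R^{(j)}_+\circ\Psi_j$ \dots'') show you have the right accounting in mind; simply change the definitions to $R^{(j+1)}:=\Gamma_{K_{j+1}}\left(\widetilde R-R^{(j)}_+\circ\Psi_j\right)$ and $R^{(j+1)}_+:=(1-\Gamma_{K_{j+1}})\left(\widetilde R-R^{(j)}_+\circ\Psi_j\right)+R^{(j)}_+\circ\Psi_j$, and the argument closes.
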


%-----------------------------------------------------
%%%%%%%%%%%%%%%%%%%%%%%%%%%%%%%%%%%%%%%%%%%%%%%%%%%%%%%%%%%%%%%%%%%%%%%%%%%%%%%%%%%%%%%%%%%%%%%%%%%%%%%%%%%%%%%%%%%%%%%%%%%%%%
%\subsection{The generating function and coordinate changes }
%%------------------
\begin{proof}
Assume that the change $\Psi_j$ is implicitly defined by
%--------------------------------------------------------------
\begin{equation}\label{eq13}
\Psi_j: \left\{
            \begin{array}{ll}
              I=\rho+\frac{\partial S}{\partial \theta}, \\
              \phi=\theta+\frac{\partial S}{\partial \rho},\\ t=t,
            \end{array}
          \right.
\end{equation}
%-------------------------------------------------------------
where $S=S(\theta, t, \rho)$ is the generating function, which will be proved to be analytic in a smaller domain
 $\mathbb{{T}}^{d+1}_{s_{j+1}}\times B(\tau_{j+1}).$
 \iffalse%%%%%%%%%%%%%%%%%%%%%%%%%%%%%-----------
 Observe that
%-------------------------------------------------------------
\begin{eqnarray*}
&& \frac{\partial^{2}S}{\partial\rho_{i}\partial\rho_{j}}=\frac{\partial^{2}S}{\partial\rho_{j}\partial\rho_{i}},\;\;d\theta_{i}\wedge d\theta_{j}=-d\theta_{j}\wedge d\theta_{i};\\
&& \frac{\partial^{2}S}{\partial \theta_{i}\partial \theta_{j}}=\frac{\partial^{2}S}{\partial\theta_{j}\partial\theta_{i}},\;\;d\rho_{i}\wedge d\rho_{j}=-d\rho_{j}\wedge d\rho_{i}.
\end{eqnarray*}
\fi%%%%%%%%%%%%%%%%%%%%%%%%%%%%%%%%-------------------------------------------------------------
By a simple computation, we have
$$d I\wedge d\theta=d\rho \wedge d\theta+\sum_{i,j=1}^d\frac{\partial^{2}S}{\partial\rho_{i}\partial\theta_{j}}d\rho_{i}\wedge d\theta_{j}=d\rho \wedge d\phi.$$
Thus the coordinates change $\Psi_j$ is symplectic if it exists. Moreover, we get the changed Hamiltonian
%-------------------------------------------------------------
\begin{equation}\label{eq14}
H^{(j+1)}:=H^{(j)}\circ \Psi_j=\ve^{-a}H_{0}(\rho+\frac{\partial S}{\partial \theta})+h^{(j)}(t,\rho+\frac{\partial S}{\partial \theta})+R^{(j)}(\theta, t, \rho+\frac{\partial S}{\partial \theta})
+R_{+}^{(j)}(\theta, t, \rho+\frac{\partial S}{\partial \theta})+\frac{\partial S}{\partial t},
\end{equation}
%--------------------------------------------------------------
where $\theta=\theta(\phi, t, \rho)$ is implicitly defined by \eqref{eq13}. We drop the sup-index $(j)$. We replace $h^{(j)}$ by $h$, for example.

By Taylor formula, we have
%----------------------------------------------------
\begin{equation}\label{eq15}
H^{(j+1)}=\ve^{-a}H_{0}(\rho)+h(t,\rho)+\ve^{-a}\langle \omega(\rho), \frac{\partial S}{\partial \theta}\rangle
+\frac{\partial S}{\partial t}+R(\theta, t, \rho)+R_{*}+R_{**},
\end{equation}
%----------------------------------------------------
where
%----------------------------------------------------
\begin{eqnarray}
R_{**}&=&R_{+} (\theta, t, \rho+\frac{\partial S}{\partial \theta}),\label{eq15+}\\
R_{*}&=&\ve^{-a}\int_{0}^{1}(1-\tau)\partial^{2}_{I}H_{0}(\rho+\tau\frac{\partial S}{\partial \theta}) (\frac{\partial S}{\partial \theta})^{2}d\tau\label{eq16}\\
&&+\int_{0}^{1}\partial_{I}R(\theta, t, \rho+\tau\frac{\partial S}{\partial\theta})\frac{\partial S}{\partial \theta}d\tau\label{eq17}\\
&&+\int_0^1\p_I\, h(t,\rho+\tau\frac{\partial S}{\partial\theta})\frac{\partial S}{\partial \theta}\, d\tau.
\label{eq18}
\end{eqnarray}
%--------------------------------------------------
Thus, we derive the homological equation:
%------------------------------------------------
\begin{equation}\label{eq19}
\frac{\partial S}{\partial t}+\ve^{-a}\langle \omega(\rho), \frac{\partial S}{\partial\theta}\rangle
=\widehat{ R}(0, t, \rho)-R(\theta, t, \rho),\quad S=S(\theta,t,\rho),
\end{equation} where $\widehat{ R}(0, t, \rho)$ is $0$-Fourier coefficient of $R(\theta,t,\rho)$ as the function of $\theta$.
%-----------------------------------------------
Recalling {\bf 3(j)}, we have $\Gamma_{K_j}R^{(j)}=R^{(j)}$. Thus we can assume $\Gamma_{K_j}S=S$.
%---------
For $f\in\{S, R\}$, let
%------------------------------------------------
\begin{equation}\label{eq20}
f(\theta,t,\rho)=\sum_{|k|+|l|\leq K_j,k\neq 0}\widehat{f}(k, l, \rho)e^{\i(\langle k, \theta\rangle+lt)},\;\i^2=-1.
\end{equation}
%------------------------------------------------
By passing to Fourier coefficients, we have
%--------------------------------------------
\begin{equation}\label{eq21}
\widehat{S}(k, l, \rho)=\frac{\i}{\ve^{-a}\langle k, \omega(\rho)\rangle +l}\widehat{R}(k, l, \rho),\;|k|+|l|\leq K_j,\; k\in \mathbb{Z}^{d}\setminus\{0\}, l\in \mathbb{Z}.
\end{equation}
%-------------------------------------------
%%%%%%%%%%%%%%%%%%%%%%%%%%%%%%%%%%%%%%%%%%%%%%%%%%%%%%%%%%%%%%%%%%%%%%
%----------------------------------
%-------------------
%---------------------------------------------------
Let $\rho\in B(\tau_j)\subset B(\tau_0)=B_0$.
By \eqref{yy3}, we have that
$$|\widehat{S}(k, l, \rho)|\leq C\frac{\varepsilon^{a}|k|^{d+1}}{\gamma}|\widehat{R}(k, l, \rho)|.$$
Moreover, by {\bf 3(j)}, for $(\theta, t,\rho)\in \mathbb{T}_{s_j^1}^{d+1}\times B(\tau_j),$ we get
%----------------------------------------------------------
\begin{equation}\label{eq35}
|S(\theta, t, \rho)|\leq \frac{C\varepsilon^{a}}{\gamma}\parallel R\parallel_{s_{j}, \tau_j}\sum_{|k|+|l|\leq K_j}|k|^{d+1}e^{-(s_{j}-s_{j}^1)(|k|+|l|)}
\leq C \ve^{(j+1)(a-b)}(\log\frac1{\ve})^{C},
\end{equation}
where $C=C(m_0, d)$.
Then by the Cauchy's estimate, we have
%--------------------------------------------
\begin{equation}\label{eq36}
 {||\frac{\partial^{p+q}}{\partial \theta^{p}\partial \rho^{q}}S(\theta, t, \rho)||_{s_j^2,\tau_j^1}\leq C \ve^{(j+1)(a-b)}(\log\frac1{\ve})^{C},
p+q\leq 2, p\geq0, q\geq0}.
\end{equation}
%---------------------------------------------------------
By \eqref{eq13} and \eqref{eq36} and the implicit function theorem (analytic version), we get that there are analytic functions
$u=u(\phi, t, \rho), v=v(\phi, t, \rho)$ defined on
the domain $\mathbb T^{d+1}_{s_{j}^3}\times B(\tau_j^3)\supset \mathbb T^{d+1}_{s_{j+1}}\times B(\tau_{j+1})$ with
\be\label{yy31} \frac{\p S(\theta,t,\rho)}{\p\theta}=u(\phi,t,\rho),\;  \frac{\p S(\theta,t,\rho)}{\p\rho}=-v(\phi,t,\rho).\ee and
\be\label{yy20}||u||_{s_j^3,\tau_j^3}\le \ve^{(j+1)(a-b)}(\log\frac1{\ve})^{C},\;
||v||_{s_j^3,\tau_j^3}\le \ve^{(j+1)(a-b)}(\log\frac1{\ve})^{C}\ee
%-----------------------------------------------------------------
such that
\begin{equation}\label{eq37}
\Psi_j: \left\{
            \begin{array}{ll}
              I=\rho+u(\phi, t, \rho),\\
              \theta=\phi+v(\phi, t, \rho), \\ t=t.
            \end{array}
          \right.
\end{equation}
Note
\[ \ve^{(j+1)(a-b)}(\log\frac1{\ve})^{C}\ll s_j-s_{j+1},\; \ve^{(j+1)(a-b)}(\log\frac1{\ve})^{C}\ll \tau_j-\tau_{j+1}.\] Then
$\Psi_j(\mathbb T^{d+1}_{s_{j+1}}\times B(\tau_{j+1}))\subset \mathbb T^{d+1}_{s_{j}}\times B(\tau_{j})$. This proves \eqref{yy8} and \eqref{yy9}.
%---------
%Comparing \eqref{eq13} and \eqref{eq37}, we have
%\be\label{yy31} \frac{\p S(\theta,t,\rho)}{\p\theta}=u(\phi,t,\rho),\;  \frac{\p S(\theta,t,\rho)}{\p\rho}=-v(\phi,t,\rho).\ee
%-----------------------------------------------------------------
 Then by {\bf 4(j)}, we have
\be\label{yy50}\parallel R_{**}(\phi, t, \rho)\parallel_{s_j^{4}, \tau_j^4}\leq \parallel R_{+}\parallel_{s_{j}, \tau_{j}}\leq  C \ve^{-b+A}(\log\frac{1}{\ve})^{C}.\ee
By \eqref{yy31}, \eqref{yy20}, {\bf 2(j)} and {\bf 3(j)}, we have
\be\label{yy55} ||R_*(\phi, t, \rho)||_{s_j^4,\tau_j^4}\le C\left(\ve^{-a}
 ||u||_{s_j^3,\tau_j^3}^2+||R||_{s_j,\tau_j}||u||_{s_j^3,\tau_j^3}\tau_j^{-1}+||h||_{s_j,\tau_j}||u||_{s_j^3,\tau_j^3} \tau_j^{-1}
  \right)\le C \ve^{-b+(j+1)(a-b)}(\log\frac1{\ve})^{C}. \ee
%---------------------
Let
\be\label{yy60-1} R^{(j+1)}(\phi, t, \rho)=\Gamma_{K_{j+1}} R_*\ee and
%-----------------------------------
\be\label{yy60}
 R^{(j+1)}_+(\phi, t, \rho)=(1-\Gamma_{K_{j+1}})R_*+R_{**}.
\ee
%----------------------------------------
By \eqref{yy50} and \eqref{yy55}, we  see that $R^{(j+1)}$ and $R^{(j+1)}_+$ satisfy {\bf 3(j+1)} and {\bf 4(j+1)}, respectively.
%----------------------------------------
Let
\begin{equation}\label{eq40}
h^{(j+1)}(t,\rho)=h(t,\rho)+\widehat{R}(0,t,\rho).
\end{equation}
%----------------------------------------
Then by {\bf 2(j)} and {\bf 3(j)}, we have
\be\label{67} ||h^{(j+1)}||_{s_{j+1},\tau_{j+1}}\le ||h^j||_{s_j,\tau_j}+||R||_{s_j,\tau_j}
\le C\ve^{-b},\ee
which fulfills {\bf 2(j+1)}. Finally, it is obvious that the functions $h^{(j+1)}(t,I),R^{(j+1)}(\theta,t,I)$ and $R^{(j+1)}_+(\theta,t,I)$ are
analytic in $T^{d+1}_{s_{j+1}}\times B(\tau_{j+1})$. Since $R$ is real for the real arguments, we have
\[\widehat{R}(-k,-l,\rho)=\overline{\widehat{R}(k,l,\rho)}, \;\rho\in\mathbb R^d, (k,l)\neq (0,0)\] where the bar is the complex conjugate. By \eqref{eq21},
we have
\[\widehat{S}(-k,-l,\rho)=\overline{\widehat{S}(k,l,\rho)}, \;\rho\in\mathbb R^d, (k,l)\neq (0,0).\] It follows that $S(\theta,t,\rho)$ is real
for the real arguments $(\theta,t,\rho)$. Moreover, the functions $h^{(j+1)}(t,I),R^{(j+1)}(\theta,t,I)$ and $R^{(j+1)}_+(\theta,t,I)$ are real for real arguments.
By \eqref{eq15}-\eqref{eq18}, \eqref{yy60-1}-\eqref{eq40}, we have that \eqref{1110} holds.
This completes the proof of the lemma.\end{proof}

Applying Lemma \ref{lemma2.1} to \eqref{eq10} with $j=0$ and $h^{(0)}\equiv 0$, we get $m_0$ symplectic changes $\Psi_j$ ($j=0,...,m_0-1$) such that
\be \label{0402-1}\Psi:=\Psi_0\circ\cdots\circ\Psi_{m_0-1},\; \Psi\left(\mathbb T^{d+1}_{s_{m_0}}\times B(\tau_{m_0})\right)\subset \mathbb T^{d+1}_{s_0}\times B(\tau_0),\ee
\be \label{0402-2}H^{(m_0)}=\ve^{-a}H_0(\rho)+h^{(m_0)}(t,\rho)+\tilde R(\phi,t,\rho),\,\, (\phi,t,\rho)\in\mathbb T^{d+1}_{s_{m_0}}\times B(\tau_{m_0}),\ee
with
\be \label{0402-3}\tilde R=\tilde R(\phi,t,\rho)=R^{(m_0)}(\phi,t,\rho)+R^{(m_0)}_+(\phi,t,\rho), \; \,\, (\phi,t,\rho)\in\mathbb T^{d+1}_{s_{m_0}}\times B(\tau_{m_0}).\ee
where $h^{(m_0)},R^{(m_0)}$ and $R^{(m_0)}_+$ obey the conditions {\bf 1(j)-4(j)} with {\bf j}$=m_0$.
%-------------------------
Thus,
\be \label{0402-4}||h^{(m_0)}(t,\rho)||_{s_{m_0},\tau_{m_0}}\le C \ve^{-b}, ||\tilde R(\phi,t,\rho)||_{s_{m_0},\tau_{m_0}}\le C \ve ^{A-b}(\log\frac1{\ve})^{C}.\ee
%---------------------------------
Let $[h^{(m_0)}](I)=\widehat{h^{(m_0)}} (0,I)$ be the  $0$-Fourier coefficient of $h^{(m_0)}(t,I)$ as the function of $t$.
%%%%%%%%%%%%----------
In order to eliminate the dependence of $h^{(m_0)} (t,I)$ on the time-variable $t$, we introduce the following transformation
\[\tilde\Psi:\; \rho=I,\; \phi=\theta+\frac{\p}{\p I}\tilde S(t,I),\; \text{here}\; \tilde S(t,I):=\int_0^t\left( [ h^{(m_0)}](I)-h^{(m_0)}(\xi,I) \right)\, d \xi.\] It is symplectic by  easy verification $d\,\rho\wedge d\phi=d\, I\wedge d\, \theta$.
Note that the transformation is not small. So $\tilde\Psi$ is not close to the Identity. In order to apply $\tilde \Psi$ to $H^{(m_0)}$, we introduce a domain
\[\mathcal{D}:=\left\{t=t_1+t_2\i\in \mathbb T_{s_{m_0}}:\; |t_2|\le \ve^{2b} \right\}\times\left\{I=x+y\,\i\in B(\tau_{m_0}):\; |x-I_0|\le (\log\frac1\ve)^{-C},\, |y|\le\ve^{2b}\right\}.\]
Note that $h^{(m_0)}(t,I)$ is real for real arguments. Thus, for $(t,I)\in\mathcal D$, we have
\[ \left|\Im \frac{\p}{\p I}\tilde S(t,I)\right|=\left|\Im \left(\frac{\p}{\p I}\tilde S(t_1+t_2\i,x+y\i)-\frac{\p}{\p I}\tilde S(t_1,x)\right)\right|\le ||\p^2 \tilde S(t,I) ||_{\mathcal{D}}(|t_2|+|y|)<\ve^{\frac{b}{2}}<\frac{1}{2}s_{m_0}.\]
Therefore,
\[\tilde\Psi(\mathbb T^{d}_{s_{m_0}/2}\times\mathcal D)\subset \mathbb T^{d+1}_{s_{m_0}}\times B(\tau_{m_0}),\]
and
\be\label{040210} \tilde H:=H^{(m_0)}\circ\tilde\Psi=\ve^{-a}H_0(I)+[h^{(m_0)}](I)+\breve{R}(\theta,t,I),\; (\theta,t,I)\in \mathbb T^{d}_{s_{m_0}/2}\times\mathcal D.
 \ee
 where
 \be\label{040211} \breve{R}(\theta,t,I):=\tilde R(\theta+\p_I\tilde S(t,I),t,I).\ee
%----------------
Recall that $\text{det}\left(\p_I^2\, H_0(I) \right)\ge c_3>0 $.
By {\bf 2(j)} with $j=m_0$, we have
\be\label{ycl1}
\ve^a ||\p_I^2[h^{(m_0)}](I)||_{\tau_{m_0}/2}\le C\ve^{a-b}\left(\log\frac1\ve\right)^C\ll c_3.\ee
%----------
 Solving the equation $\p_I H_0(I)+\ve^{a} \p_I [h^{(m_0)}](I)=\omega(I_0)$ by Newton iteration, we get that there exists $I_*\in\mathbb R^d\bigcap B(\tau_{m_0}/2)$ with $|I_*-I_0|\le C\ve^{a-b} (\log\frac1{\ve})^C$ such that
\[\p_I H_0(I_*)+\ve^{a} \p_I [h^{(m_0)}](I_*)=\omega(I_0), \;\text{here}\; \omega(I_0)=\p_I H_0(I_0).\]
Let $I=I_*+\rho$ with $\rho\in\mathbb C^d$ and $|\rho|<\ve^{2b}$. By Taylor formula at $I=I_*$ up to the second order,
\be \label{yyy90}\tilde H=\ve^{-a} H_0(I_*)+[h^{(m_0)}](I_*)+\ve^{-a}\l \omega(I_0), \rho\r+\ve^{-a}\l \Omega \rho,\rho\r+R_{low}(\theta,t,\rho)+R_{high}(\theta,t,\rho)\ee
where
\be \label{yyy91}\Omega=\frac12\p^2_I\left.\left(H_0(I)+\ve^{a}[h^{(m_0)}](I)\right)\right|_{I=I_*} \ee
\be \label{yyy92}R_{low}(\theta,t,\rho)=\breve R(\theta,t,I_*)+\l \p_I \breve R(\theta,t,I_*),\rho\r+\frac12\l \p_I^2 \breve R(\theta,t,I_*)\rho,\rho\r\ee
\be \label{yyy93}R_{high}(\theta,t,\rho)=\ve^{-a}\int_0^1\frac{(1-x)^2}{2}\p^3_x\left(H_0(I_*+x \rho)+\ve^{a}[h^{(m_0)}](I_*+x \rho)+\ve^{a}\breve{ R}(\theta,t,I_*+x \rho)\right)\, dx.\ee

We introduce some notations: With abuse of notation a bit, let $B(r)=\{z\in\mathbb C^d:\; |z-I_*|\le r\}$.
If $f: \mathbb{T}_{s}^{d+1}\times B(r)\rightarrow \mathbb{C}^{l} (l\geq 1)$ is real analytic, and
$$\sup _{x\in\mathbb{T}^{d+1}_{s}\times B(r)}|f(x, I)|\leq C \ve^{\beta}|I|^{\alpha}, \alpha\geq 0,\beta\in\mathbb R,$$
then we write $f=O_{s,r}(\ve^{\beta}|I|^{\alpha}).$ If $f$ is independent of $I$, furthermore, we write $f=O_{s}(\ve^{\beta}|I|^{\alpha}).$ Note that $\ve (\log\frac1{\ve})^C\ll 1$. Let $E=A-9 b-1$.
In view of \eqref{0402-4}, we have
\be\label{yyy94} \p_I^p\breve R(\theta,t,I_*)=O_{\ve^{2b}}(\ve^E),p=0,1,2,\ee
\be\label{yyy95} R_{high}(\theta,t,\rho)=O_{\ve^{2b},\ve^{2b}}(\ve^{-(a+6b)}|\rho|^3 ).\ee
By \eqref{yy4}, \eqref{ycl1} and \eqref{yyy91}, we have
\be\label{ycl2} ||\Omega||\le C,\quad ||\Omega^{-1}||\le C. \ee

%----------------------------------------
%-----------------------------------------------------
%-----------------------------------------------------

%-----------
\section{Extension of Kolmogorov's Theorem}

%%%%%%%%%%%%%%%%%%%%%%%%%%%%%%%%%%%%%%%%%%%%%%%%%%%%%%%%%%%%%%%%%%%%%%%%%%%%%%%%%%%%%%%%%%%%%%%%%%%%%%%%%%%%%%%%%%%%%%%%%%%%%%%%%%%%%%%%%%%%%%%%%%%%%%%
By a bit abuse of notation, reset $s_0:=\ve^{2b}$ and $r_0:=\ve^{2b}$.

\begin{thm}\label{thm1}
Consider a Hamiltonian $$H=C+N(I)+R_{low}(\theta, t, I)+R_{high}(\theta, t, I),\; (\theta, t, I)\in \mathbb T^{d+1}_{s_0}\times B(r_0)$$ with the symplectic structure $d I\wedge d\theta$ satisfying
\begin{itemize}
  \item [(1)] $N(I)=\ve^{-a}\langle \omega, I\rangle+\ve^{-a}\langle \Omega I, I\rangle,$
  \item [(2)] $\omega=\omega(I_0)$ is D.C., that is
  \[|\ve^{-a}\langle k, \omega\rangle+l|\geq \frac{\gamma}{(1+|k|)^{d+1}}, (k,l)\in\mathbb Z^d\times\mathbb Z\setminus\{(0,0)\},\]
  \item [(3)] $||\Omega||\le C,\; ||\Omega^{-1}||\le C,$

  \item [(4)]  $N(I),$ $R_{low}(\theta, t, I)$ and $R_{high}(\theta, t, I)$ are real analytic in $\mathbb{T}^{d+1}_{s_{0}}\times B(r_{0}),$
    \item [(5)]
        $R_{low}=R_{0}(\theta, t)+\langle R_{1}(\theta, t), I\rangle+\langle R_{2}(\theta, t)I, I\rangle,$ with
  $ R_j=O_{s_0}(\ve^{E}),$ $j=0,1,2$,
\item [(6)] $R_{high}=O_{s_0,r_0}(\ve^{-(a+6b)}|I|^{3}).$
\end{itemize}
Then there exists a symplectic coordinate change
\[\Psi:\; I=\rho+u(\phi,t,\rho),  \theta=\phi+v(\phi,t,\rho), t=t\]
 where $u=O_{s_0/2,r_0/2}(\ve^{\frac{E}{2}}),v=O_{s_0/2,r_0/2}(\ve^{\frac{E}{2}})$
 such that
 $\Psi\left(\mathbb{T}^{d+1}_{s_{0}/2}\times B(\frac{r_{0}}{2})\right)\subset \mathbb{T}^{d+1}_{s_{0}}\times B(r_{0})$ and
the Hamiltonian $H$ is changed by $\Psi$ into
$$H^{\infty}(\phi, t, \rho)=H\circ \Psi(\phi, t, \rho)=C+\ve^{-a}\langle \omega, \rho\rangle+\ve^{-a}\langle \Omega^{\infty}\rho, \rho\rangle+O_{s_{0}/2,r_{0}/2}(\ve^{-a-6b}|\rho|^{3}),(\phi, t, \rho)\in \mathbb{T}^{d+1}_{s_{0}/2}\times B(\frac{r_{0}}{2}),$$
where $$|\Omega^{\infty}-\Omega|\leq C\ve^{\frac{E}{2}}.$$
\end{thm}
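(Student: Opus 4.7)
The theorem is a Kolmogorov-type normal form statement at the fixed frequency vector $\omega$ in the presence of the large parameter $\ve^{-a}$. I would prove it by a standard super-convergent KAM iteration, producing a sequence of symplectic transformations whose composition conjugates $H$ to the stated normal form, with $\omega$ preserved and the twist matrix updated from $\Omega$ to a nearby $\Omega^{\infty}$.

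\emph{One KAM step.} Given $H = C + \ve^{-a}\l\omega,\rho\r + \ve^{-a}\l\Omega\rho,\rho\r + R_{low} + R_{high}$ with $R_{low} = R_0 + \l R_1,\rho\r + \l R_2\rho,\rho\r$ of size $\ve^{E}$ and $R_{high}=O(\ve^{-a-6b}|\rho|^3)$, I would seek a symplectic $\Psi$ generated by
\[ S(\theta,t,\rho) = \l c,\theta\r + S_0(\theta,t) + \l S_1(\theta,t),\rho\r + \tfrac12\l S_2(\theta,t)\rho,\rho\r, \]
via $I = \rho + \p_\theta S$, $\phi = \theta + \p_\rho S$. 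The constant shift $c$ is fixed so that the new linear-in-$\rho$ coefficient remains exactly $\ve^{-a}\omega$; by the invertibility of $\Omega$ (hypothesis~(3)), this gives $c = -\tfrac12\ve^{a}\Omega^{-1}[R_1]$ up to a small $R_{high}$ correction. Matching powers of $\rho$ in the Taylor expansion of $H\circ\Psi$ up to order two yields a coupled system of homological equations of the schematic form
\[ \p_t S_j + \ve^{-a}\l\omega,\p_\theta S_j\r = -(R_j - [R_j]) - (\text{cross terms from } N,\, R_{low},\, R_{high}),\quad j=0,1,2, \]
solvable by Fourier decomposition in $(\theta,t)$: each coefficient is recovered by dividing by $\ve^{-a}\l k,\omega\r + l$, which by hypothesis~(2) is bounded below by $\gamma/(1+|k|)^{d+1}$, yielding $\|S_j\|_{s_0-\sigma}\lesssim \gamma^{-1}\sigma^{-(2d+2)}\ve^{E}$ by the standard Paley estimate on strips. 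The mean parts are absorbed: $[R_0]$ into the new constant, $[R_1]$ into the shift $c$, and $[R_2]$ into the twist update $\Omega\to\Omega+\ve^{a}[R_2]$.

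\emph{Estimates and iteration.} After conjugation, the new perturbation collects contributions from (i) the quadratic-in-$\p_\theta S$ remainder of $N(\rho+\p_\theta S)$, of size $\ve^{-a}(\gamma^{-1}\ve^{E})^{2}$; (ii) the first-order $\p_\theta S$-correction in $R_{low}(\theta,t,\rho+\p_\theta S)$, of size $\gamma^{-1}\ve^{2E}$; (iii) the Taylor expansion of $R_{high}(\rho+\p_\theta S)$ in $\rho$, whose $|\rho|^{\le 2}$ part is absorbed by the coupled homological equations and whose $|\rho|^{\ge 3}$ part forms the bulk of the new $R^{+}_{high}$, of size $\ve^{-a-6b}|\rho|^3$. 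Because $E = A - 9b - 1$ with $A = 200 d(a+b)$, each of the exponents $2E-a$ and $3E-a-6b$ is enormous, so a quadratic recursion of the form $\ve_{n+1} \le \gamma^{-C}\sigma_n^{-C}\ve_n^{2}$ drives $\ve_n\to 0$ super-exponentially on geometrically shrinking strips $s_{n+1} = s_n(1-2^{-n-2})$ and balls $B(r_n)$ with $r_n\to r_0/2$. The infinite composition $\Psi = \Psi_0\circ\Psi_1\circ\cdots$ converges to a real analytic symplectic map $\mathbb T^{d+1}_{s_0/2}\times B(r_0/2)\to\mathbb T^{d+1}_{s_0}\times B(r_0)$ with $\|\Psi-\mathrm{id}\| = O(\gamma^{-1}\ve^{E})\ll\ve^{E/2}$; the cumulative twist update is dominated by the first step, giving $|\Omega^{\infty}-\Omega|\le C\ve^{E/2}$.

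The principal technical obstacle is the interaction between the large factor $\ve^{-a}$ in the twist and the small quantities $\gamma^{-1}=(\log\frac1\ve)^{2C_1}$ and $\ve^{E}$. A naive bound on the quadratic-in-$S$ error from $N$ picks up an extra $\ve^{-a}$, but crucially the Diophantine divisor in~(2) is bounded below only by $\gamma$ (not $\ve^{-a}\gamma$), so $S$ itself is of size $\gamma^{-1}\ve^{E}$ with no $\ve^{-a}$ amplification; the net loss $\ve^{2E-a}$ remains super-tiny thanks to the huge gap $E-a = 200d(a+b)-9b-1-a$. A related subtlety is that the $|\rho|^2$ contribution coming from evaluating $R_{high}$ at $I=\p_\theta S_0$, of size $\gamma^{-1}\ve^{E-a-6b}$, would by itself make the $R_2$-iteration diverge linearly in $\ve^{a+6b}$; this is overcome by solving the homological equation for $S_2$ \emph{simultaneously} with those for $S_0, S_1$, folding the $R_{high}$ contribution into the right-hand side, so that the next-step $R_2^{+}$ is quadratic in $\ve^{E}$ rather than merely linear in $\ve^{E-a-6b}$. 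Finally, the cumulative shift $\sum_n c_n \sim \ve^{E+a}\gamma^{-1}$ stays well inside $B(r_0/2) = B(\ve^{2b}/2)$ because $E+a\gg 2b$, justifying the claimed domain inclusion.
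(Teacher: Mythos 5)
Your proposal is correct and follows essentially the same route as the paper: a superconvergent KAM iteration with a generating function $S = S_0 + \langle S_1,\rho\rangle + \langle S_2\rho,\rho\rangle$, a constant shift of the action (your linear-in-$\theta$ term $\langle c,\theta\rangle$ is the same device as the paper's shift $\nu$ in $I = \nu + \rho + \partial_\theta S$, with $\nu$ fixed via the invertibility of $\Omega$), the three coupled homological equations for $S_0, S_1, S_2$ solved by Fourier under the mixed Diophantine condition, the twist update $\Omega \mapsto \Omega + \ve^a[R_2]$, and the key observation that the $|\rho|^2$ coefficient produced by $R_{high}$ must be absorbed into the $S_2$ equation rather than left in the error. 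The only cosmetic discrepancy is that the paper runs the recursion at rate $\epsilon_{j+1}=\epsilon_j^{4/3}$ rather than your nominally quadratic scheme, which makes the bookkeeping of the polynomial losses in $\gamma^{-1}$, $\sigma^{-1}$ and the powers $\ve^{-a}$, $\ve^{-b'}$ a little more transparent, but both choices work for the same reason you identify, namely that $E = A - 9b - 1$ with $A = 200d(a+b)$ is enormously larger than $a + 6b$.
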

%--------------------------------------------------------------------
\begin{cor}
The Hamiltonian system defined by Hamiltonian $H$ has an invariant torus with rotational frequency $(\ve^{-a}\omega(I_0),2\pi)$ in the extended phase space $\mathbb T^{d+1}\times \mathbb R^d$.
\end{cor}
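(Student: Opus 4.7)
The plan is to read off the invariant torus directly from the Kolmogorov normal form
\[H^{\infty}(\phi,t,\rho)=C+\langle\ve^{-a}\omega,\rho\rangle+\ve^{-a}\langle\Omega^{\infty}\rho,\rho\rangle+O_{s_0/2,r_0/2}\!\left(\ve^{-a-6b}|\rho|^{3}\right)\]
produced by Theorem \ref{thm1}, with $\omega=\omega(I_0)$, and then push it back through the chain of symplectic changes constructed in Sections 2-3. The decisive observation is that the only $(\phi,t)$-dependence of $H^{\infty}$ sits in the cubic-in-$\rho$ remainder; hence $\p_{\phi}H^{\infty}|_{\rho=0}=0$ and $\p_{\rho}H^{\infty}|_{\rho=0}=\ve^{-a}\omega(I_0)$. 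Viewing $(\phi,t,\rho)$ as coordinates on the extended phase space $\mathbb T^{d+1}\times\mathbb R^d$ (so that Hamilton's equations are supplemented by $\dot t=1$), the flow of $H^{\infty}$ restricts to $\mathcal T_{0}:=\{\rho=0\}$ as
\[\dot\phi=\ve^{-a}\omega(I_0),\qquad \dot t=1,\qquad \dot\rho=0.\]
Thus $\mathcal T_{0}$ is an embedded invariant $(d+1)$-torus of $H^{\infty}$ carrying a linear, quasi-periodic flow with the claimed rotation vector $(\ve^{-a}\omega(I_0),2\pi)$ in the $2\pi$-periodic convention of the paper.

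Next I would transport $\mathcal T_{0}$ forward through the symplectic composition
\[\Phi:=\Psi_{0}\circ\cdots\circ\Psi_{m_0-1}\circ\tilde\Psi\circ\Psi,\]
whose factors are, respectively, the changes from Lemma \ref{lemma2.1}, the explicit shift introduced before \eqref{040210}, and the map produced by Theorem \ref{thm1}. Since $H^{\infty}=H\circ\Phi$ and $\Phi$ is symplectic, it conjugates the Hamiltonian flows; hence $\Phi(\mathcal T_{0})$ is an embedded invariant $(d+1)$-torus of the original Hamiltonian $H$ on which the flow is analytically conjugate to the linear flow on $\mathcal T_{0}$, in particular with rotation vector $(\ve^{-a}\omega(I_0),2\pi)$.

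The only point that needs verification is that each pull-back keeps the torus inside the appropriate analyticity domain. Since $\Psi-\mathrm{id}=O(\ve^{E/2})$ by Theorem \ref{thm1}, the image $\Psi(\mathcal T_{0})$ is an $O(\ve^{E/2})$-graph over $\mathbb T^{d+1}$ around $\{I=I_*\}$; the only delicate step is the non-small transformation $\tilde\Psi$, but $\mathcal D$ was designed in \S 2 precisely so that $H^{(m_0)}\circ\tilde\Psi$ is analytic on it, and $\Psi(\mathcal T_{0})$ lies well inside $\mathcal D$. The remaining maps $\Psi_{j}$ are close to the identity by \eqref{yy9}, so the composition $\Phi$ is well-defined on a neighborhood of $\mathcal T_{0}$, and the corollary follows immediately.
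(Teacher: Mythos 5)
Your argument is correct and matches the (implicit) proof the paper intends: read off $\{\rho=0\}$ as an invariant torus of $H^{\infty}$ — since the only $(\phi,t)$-dependence is in the $O(|\rho|^{3})$ remainder, which together with its first derivative vanishes at $\rho=0$, one gets $\dot\rho=0$, $\dot\phi=\ve^{-a}\omega(I_0)$, $\dot t=1$ there — and then carry it back via the symplectic change $\Psi$ from Theorem~\ref{thm1}. Your additional push-back through $\Psi_{0}\circ\cdots\circ\Psi_{m_0-1}\circ\tilde\Psi$ is sound but is really the final step of the proof of Theorem~\ref{thm01}; the corollary as stated (whose $H$ is the normal-form Hamiltonian of Theorem~\ref{thm1}) requires only the map $\Psi$ produced by that theorem.
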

%%%%%%%%%%%%%-------------------------------------------------------
The proof is finished by the following iterative lemma. To this end, we introduce some iterative constants, iterative parameters and iterative domains:
%------------------------------------------------------------
\begin{itemize}
  \item $\epsilon_{0}=\ve^{E}, \epsilon_{j}=(\ve^{E})^{(4/3)^{j}}, j=1, 2, \cdots, \epsilon_{0}>\epsilon_{1}>\cdots >\epsilon_{j}\downarrow 0$;
  \item $e_{j}=\frac{\sum_{l=1}^{j}l^{-2}}{100\sum_{l=1}^{\infty}l^{-2}}$;
  \item $s_{j}=s_{0}(1-e_{j})$ (so $s_{j}>s_{0}/2$ for all $j=1, 2, \cdots$);
\end{itemize}
%-------------------------------------------------------------
\begin{itemize}
  \item $r_{j}=r_{0}(1-e_{j})$ (so $r_{j}>r_{0}/2$ for all $j=1, 2, \cdots$);
  \item $s_{j}^{l}=(1-\frac{l}{10})s_{j}+\frac{l}{10}s_{j+1}$ (so $s_{j+1}<s_{j}^{l}<s_{j}, l=1, 2,\cdots, 10$);
  \item $r_{j}^{l}=(1-\frac{l}{10})r_{j}+\frac{l}{10}r_{j+1}$ (so $r_{j+1}<r_{j}^{l}<r_{j}, l=1, 2,\cdots, 10$);
  \item For any $\alpha>0$, write $\e_j^{\alpha-}= j^C\,\left( \log\frac1\ve\right)^C\,\e^{\alpha}_j$.
  %, and $\e_j^{\alpha+}= j^{-C}\,\left( \log\frac1\ve\right)^{-C}\,\e^{\alpha}_j$.
\end{itemize}
%-----------------------------------------------------------
\begin{lem}[Iterative Lemma]
Assume that we have had $m$ Hamiltonian functions, for $j=0, 1, \cdots, m-1$ satisfying
\begin{eqnarray}
\label{eq61} && H^{(j)}=N^{(j)}+R_{low}^{(j)}+R_{high}^{(j)},\\
\label{eq62} && N^{(j)}=\ve^{-a}\langle \omega, I\rangle+\ve^{-a}\langle \Omega^{(j)}I, I\rangle,
                \parallel \Omega^{(j)}-\Omega^{(j-1)}\parallel\leq C\epsilon_{j-1}\ve^{-b^\prime},j\ge 1, \Omega^{(0)}=\Omega,b^\prime:=12d(a+b),  \\
\label{eq63} && R_{low}^{(j)}(\theta,t,I) , R_{high}^{(j)}(\theta,t,I) \text{ are real analytic in } \mathbb{T}^{d+1}_{s_{j}}\times B(r_{j}), \\
\label{eq64} && R^{(j)}_{low}=R^{(j)}_{0}(\theta, t)+\langle R^{(j)}_{1}(\theta, t), I\rangle +\langle R^{(j)}_{2}(\theta, t)I, I\rangle, \\
\label{eq65} &&  R^{(j)}_{p}(\theta,t,I)=O_{s_{j}}(\epsilon_{j}),\; (p=0, 1, 2),\\
\label{eq66} && R_{high}^{(j)}(\theta,t,I)=O_{s_{j}, r_{j}}(\ve^{-(a+6b)}|I|^{3}).
\end{eqnarray}
Then there exists a symplectic coordinate
\be\label{eq66x}\Psi_m:\; I=\nu+\rho+u_{m}(\phi, t, \rho), \theta=\phi+v_{m}(\phi, t, \rho), t=t \ee
%-----------------------------------------%-----------------------------------------------
where $\nu$ is a constant depending $\ve$, $(\phi, t, \rho)\in \mathbb{T}^{d+1}_{s_{m}}\times B(r_{m})$ and
\begin{equation}\label{eq66y}
 u_{m}=O_{s_{m}, r_{m}}(\ve^{-3b^\prime}\e_{m-1}),v_{m}=O_{s_{m}, r_{m}}(\ve^{-3b^\prime}\e_{m-1}),
\end{equation}
%-----------------------------------------------
\begin{equation}\label{66++}
\Psi_{m}(\mathbb{T}^{d+1}_{s_{m}}\times B(r_{m}))\subset \mathbb{T}^{d}_{s_{m-1}}\times B(r_{m-1})
\end{equation}
such that the changed Hamiltonian
$H^{(m)}=H^{(m-1)}\circ \Psi_{m}$ satisfies \eqref{eq61}-\eqref{eq66} by replacing $j$ by $m.$
\end{lem}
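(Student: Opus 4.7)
I would construct $\Psi_m$ as the composition of two pieces: a constant translation $I\mapsto I+\nu$ in the action variable, followed by a close-to-identity symplectic map generated implicitly by a function $S=S(\theta,t,\rho)$ via
\[I=\rho+\nu+\partial_\theta S,\qquad \phi=\theta+\partial_\rho S,\qquad t=t.\]
The translation $\nu$ absorbs the $(\theta,t)$-mean of the linear-in-$I$ part of $R_{low}^{(m-1)}$, which is what keeps the base frequency $\omega$ pinned across all iteration steps; the generating function $S$ kills the $(\theta,t)$-oscillatory parts of $R_{low}^{(m-1)}$. Since $R_{low}^{(m-1)}$ is a polynomial of degree $\le 2$ in $I$, I would seek $S$ of matching polynomial form $S=F_0(\theta,t)+\langle F_1(\theta,t),\rho\rangle+\langle F_2(\theta,t)\rho,\rho\rangle$. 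The $(\theta,t)$-mean of the quadratic piece $[R_2^{(m-1)}]$ is not killed but absorbed into the normal form by setting $\Omega^{(m)}:=\Omega^{(m-1)}+\varepsilon^{a}[R_2^{(m-1)}]$, which gives $\|\Omega^{(m)}-\Omega^{(m-1)}\|\le C\varepsilon^{a}\epsilon_{m-1}\le C\epsilon_{m-1}\varepsilon^{-b'}$ as required by \eqref{eq62}. The shift $\nu$ is then uniquely determined (invertibility of $\Omega^{(m-1)}$ follows from the telescoping bound on $\|\Omega^{(j)}-\Omega\|$) and satisfies $|\nu|\lesssim\varepsilon^{a}\epsilon_{m-1}$.

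Matching the $\rho^0$, $\rho^1$, $\rho^2$ orders after Taylor-expanding $H^{(m-1)}\circ\Psi_m$, the cancellation of $\tilde R_{low}^{(m-1)}:=R_{low}^{(m-1)}-[R_{low}^{(m-1)}]$ reduces to a triangular system of three scalar transport equations of the form
\[\partial_t F_p+\varepsilon^{-a}\langle\omega,\partial_\theta F_p\rangle=\tilde R_p^{(m-1)}+(\text{lower-degree coupling from }\Omega^{(m-1)}),\quad p=0,1,2,\]
solved in order (first $F_0$, then $F_1$, then $F_2$) by Fourier: $\widehat{F_p}(k,l)=\widehat{(\text{RHS})}(k,l)/[\mathbf{i}(l+\varepsilon^{-a}\langle k,\omega\rangle)]$. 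The two-scale Diophantine condition built into Theorem \ref{thm01} is exactly what makes this solvable with the right loss: for low modes $|k|+|l|\le(\log\frac{1}{\varepsilon})^{C_1}$ the strong bound supplies the favorable factor $\varepsilon^{a}$, while for high modes the coarser bound $\gamma/(1+|k|)^{d+1}$ is harmless because the exponential Paley--Wiener decay of $\widehat{R_p^{(m-1)}}(k,l)$ on the analyticity strip $\mathbb{T}^{d+1}_{s_{m-1}}$ is super-polynomially small in $\log\frac{1}{\varepsilon}$. Summing and applying Cauchy estimates on the slightly smaller domain $\mathbb{T}^{d+1}_{s_m^4}\times B(r_m^4)$ yields $\|S\|$, $\|\partial_\theta S\|$, $\|\partial_\rho S\|\lesssim\varepsilon^{-3b'}\epsilon_{m-1}$, and the analytic implicit function theorem --- applicable because these bounds are much smaller than $s_m^4-s_m$ and $r_m^4-r_m$ --- converts the implicit definition of $\Psi_m$ into explicit $u_m,v_m$ satisfying \eqref{eq66y} and \eqref{66++}.

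It then remains to read off the new perturbation. After the cancellations produced by the homological equation, the residual terms at orders $\rho^0,\rho^1,\rho^2$ form a sum of products each at least quadratic in $\{S,\partial S,\nu,R_{low}^{(m-1)}\}$ (coming from $\frac{1}{2}\{\{N^{(m-1)},S\},S\}$, from $\{R_{low}^{(m-1)},S\}$, and from the second-order Taylor corrections of $R_{low}^{(m-1)}$ in $I$); they combine into $R_{low}^{(m)}=O(\varepsilon^{-C}\epsilon_{m-1}^2)$, which is bounded by $\epsilon_m=\epsilon_{m-1}^{4/3}$ by the smallness of $\varepsilon$. All remainders at order $\rho^{\ge 3}$, together with $R_{high}^{(m-1)}\circ\Psi_m$ and the third-order Taylor remainder of $N^{(m-1)}$, fold into $R_{high}^{(m)}$; the bound $O_{s_m,r_m}(\varepsilon^{-(a+6b)}|\rho|^3)$ is preserved because $|\nu|+|u_m|\ll r_m$ and the prefactor $\varepsilon^{-(a+6b)}$ already dominates the step losses.

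The main obstacle I expect is the careful accounting of small-divisor and domain-shrinkage losses: one must track simultaneously the $\varepsilon^{-a}$ factor from small divisors, the factor $1/\gamma=(\log\frac{1}{\varepsilon})^{2C_1}$, and negative powers of $(s_{m-1}-s_m)\sim 1/m^2$ and $(r_{m-1}-r_m)\sim 1/m^2$ arising from Cauchy estimates on the shrinking domains. The constant $b'=12d(a+b)$ is tailored to absorb all of these into a single $\varepsilon^{-b'}$ factor per step, so that the superexponential convergence $\epsilon_j=(\varepsilon^{E})^{(4/3)^j}$ survives. A consistency side-check is that $\sum_j\|\Omega^{(j+1)}-\Omega^{(j)}\|\le C\sum_j\epsilon_j\varepsilon^{-b'}\ll 1$, so that $\Omega^{(m)}$ remains invertible with $\|(\Omega^{(m)})^{-1}\|\le 2\|\Omega^{-1}\|$ throughout the iteration; this is exactly what allows the definition of $\nu$ at each step.
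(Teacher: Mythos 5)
Your overall construction matches the paper's: a constant shift $\nu$ plus a generating function $S=S_0+\langle S_1,\rho\rangle+\langle S_2\rho,\rho\rangle$, four homological relations (three transport equations plus the algebraic equation fixing $\nu$), the normal-form update of $\Omega$ absorbing a zero-mean, and the implicit function theorem converting the implicit change of variables into explicit $u_m,v_m$. So the skeleton is right.

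The gap is in how you treat $R_{high}^{(m-1)}\circ\Psi_m$. You claim that it, together with all order-$\rho^{\ge 3}$ remainders, ``folds into $R_{high}^{(m)}$,'' and that the residual at orders $\rho^0,\rho^1,\rho^2$ is a sum of terms \emph{at least quadratic} in $\{S,\partial S,\nu,R_{low}^{(m-1)}\}$, hence $O(\epsilon_{m-1}^2)\le\epsilon_m$. This is not true. Expanding $R_{high}^{(m-1)}(\theta,t,\rho+\partial_\theta S)$ in $\rho$, the cubic part of $R_{high}^{(m-1)}$ spits out a genuine order-$\rho^2$ contribution
\[
\tfrac12\bigl\langle \bigl(\partial_I^3 R_{high}^{(m-1)}(\theta,t,0)\,\partial_\theta S_0\bigr)\rho,\rho\bigr\rangle,
\]
which is only \emph{linear} in $\partial_\theta S_0$ and thus only $O(\ve^{-(a+6b)}\ve^{-C}\epsilon_{m-1}|\rho|^2)$, not $O(\epsilon_{m-1}^2|\rho|^2)$. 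Since $\epsilon_{m-1}\gg\epsilon_m=\epsilon_{m-1}^{4/3}$, this term cannot be dumped into $R_{low}^{(m)}$ with bound $\epsilon_m$, and it is not $O(|\rho|^3)$ either, so it cannot go into $R_{high}^{(m)}$. The paper handles this by adding and subtracting exactly this quantity, putting $\tfrac12\partial_I^3 R_{high}\,\partial_\theta S_0$ into the right-hand side $R_{**}$ of the homological equation for $S_2$, so that its oscillatory part is killed by $S_2$ and its $(\theta,t)$-mean is absorbed into the frequency matrix: $\Omega^{(m)}=\Omega^{(m-1)}+\ve^a\widehat{R_{**}}(0,0)$, \emph{not} merely $\Omega^{(m-1)}+\ve^a[R_2^{(m-1)}]$ as you wrote. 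Without this correction your estimate for $R^{(m)}_{low}$ fails, and the iteration does not close.

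A secondary inaccuracy: you cite $|\nu|\lesssim\ve^a\epsilon_{m-1}$, whereas solving $2\ve^{-a}\Omega^{(m-1)}\nu=-\widehat{R_*}(0,0)$ with $R_*=2\ve^{-a}\Omega^{(m-1)}\partial_\theta S_0+R_1^{(m-1)}$ gives $|\nu|\lesssim\ve^{-C}\epsilon_{m-1}$, not a positive power of $\ve$. This does not break the argument (still $\ll r_m$), but be careful, because the correct sign of the exponent of $\ve$ in all these bounds is exactly what the constant $b'$ is calibrated to absorb.
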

%-----------------------------------------------------------
\begin{proof}
Assume the coordinate change $\Psi=\Psi_{m}$ can be implicitly defined by
%------------------------------------
\begin{equation}\label{71} \Psi:\;
    I=\nu+\rho+\frac{\partial S}{\partial \theta},
    \phi=\theta+\frac{\partial S}{\partial \rho}, t=t,
\end{equation}
%------------------------------------
where function  $S=S(\theta, t, \rho)$ and constant vector $\nu$ will be specified by homological equations later. Then $\Psi$ is symplectic if it is well-defined.
Let
\begin{equation}\label{72}
S=S_{0}(\theta, t)+\langle S_{1}(\theta, t), \rho\rangle +\langle S_{2}(\theta, t)\rho, \rho\rangle.
\end{equation}
Then
\begin{eqnarray*}
H^{(m)}&=&H^{(m-1)}\circ \Psi_{m}\label{326}\\
&=& N^{(m-1)}(v+\rho+\frac{\partial S}{\partial \theta})+R_{low}^{(m-1)}(\theta, t, v+\rho+\frac{\partial S}{\partial \theta})+
R_{high}^{(m-1)}(\theta, t, v+\rho+\frac{\partial S}{\partial \theta})+\frac{\partial S(\theta, t, \rho)}{\partial t}\label{327}\\
&=&\ve^{-a}\langle \omega, v+\rho+\partial_{\theta}S\rangle+\ve^{-a}\langle \Omega^{(m-1)}(v+\rho+\partial_{\theta}S), v+\rho+\partial_{\theta}S\rangle
+R_{0}^{(m-1)}(\theta, t)+\frac{\partial S}{\partial t}\label{328}\\
&&+\langle R_{1}^{(m-1)}(\theta, t), v+\rho+\partial_{\theta}S\rangle+\langle R_{2}^{(m-1)}(\theta, t)(v+\rho+\partial_{\theta}S), v+\rho+\partial_{\theta}S\rangle
+R_{high}(\theta, t, v+\rho+\partial_{\theta}S).\label{329}
\end{eqnarray*}
{ Let $\omega\cdot \partial_{\theta} S=\langle \omega, \partial_{\theta} S\rangle.$ By a simple computation, we have}
\begin{eqnarray}
H^{(m)}&=&\ve^{-a}\langle \omega, v\rangle+\ve^{-a}\langle \omega, \rho\rangle
+\ve^{-a}\left(\omega\cdot \partial_{\theta}S_{0}+\langle \omega\cdot \partial_{\theta}S_{1}, \rho\rangle+\langle \omega\cdot \partial_{\theta}S_{2}\rho, \rho\rangle\right)\nonumber\\
&&+\ve^{-a}\left(\langle \Omega^{(m-1)}v, v\rangle+2\langle \Omega^{(m-1)}v, \rho\rangle+ \langle \Omega^{(m-1)}\rho, \rho\rangle
+2\langle \Omega^{(m-1)}v, \partial_{\theta} S\rangle\right)\nonumber\\
\label{eq*}&&+\ve^{-a}\left(2\langle \Omega^{(m-1)}\rho, \partial_{\theta}S_{0}\rangle+2\langle \Omega^{(m-1)}\rho, \langle \partial_{\theta}S_{1}, \rho\rangle\rangle
+2\langle \Omega^{(m-1)}\rho, \langle \partial_{\theta}S_{2}\rho, \rho\rangle\rangle\right)\\
&&+\ve^{-a} \langle \Omega^{(m-1)}\p_\theta S, \p_\theta S\rangle+R_{0}^{(m-1)}+\langle R_{1}^{(m-1)}, v+\partial_{\theta}S\rangle+\langle R_{1}^{(m-1)}, \rho\rangle\nonumber\\
&&+\langle R_{2}^{(m-1)}(v+\partial_{\theta}S), v+\partial_{\theta}S\rangle+\langle R_{2}^{(m-1)}\rho, \rho\rangle+2\langle R_{2}^{(m-1)}(v+\partial_{\theta}S), \rho\rangle\nonumber\\
&&+\frac{1}{2}\langle (\partial_{I}^{3}R_{high}(\theta, t, 0)\partial_{\theta}S_{0})\rho, \rho\rangle+\partial_{t}S\nonumber\\ && +R_{high}(\theta, t, v+\rho+\partial_{\theta} S)-\frac{1}{2}\langle (\partial_{I}^{3}R_{high}(\theta, t, 0)\partial_{\theta}S_{0})\rho, \rho\rangle.\nonumber
\end{eqnarray}
%%%---------------------------
In the following, we omit the term $\ve^{-a}(\langle \omega, v\rangle+\langle \Omega^{(m-1)}v, v\rangle)$ which does not affect the dynamics. Let
\be\label{327-1}R_*(\theta,t)=2\ve^{-a}\Omega^{(m-1)}\partial_{\theta}S_{0}(\theta,t)+R_{1}^{(m-1)}(\theta,t),\ee
\be\label{327-2}R_{**}(\theta,t)=\frac{1}{2}\partial^{3}_{I}R_{high}(\theta, t, 0)\partial_{\theta}S_{0}(\theta,t)+2\ve^{-a}\Omega^{(m-1)}(\partial_{\theta}S_{1}(\theta,t))+R_2^{(m-1)}(\theta,t).\ee
From the changed $H^{(m)}$, we derive the homological equations:
\begin{eqnarray}
\label{eqh1}&&\ve^{-a}\omega\cdot \partial_{\theta}S_{0}(\theta, t)+\partial_{t}S_{0}(\theta, t)+R_{0}^{(m-1)}(\theta, t)=\widehat{R_{0}^{(m-1)}}(0, 0),\\
\label{eqh2}&& \ve^{-a}\omega\cdot \partial_{\theta}S_{1}(\theta, t)+\partial_{t}S_{1}(\theta, t)+R_*(\theta,t)=\widehat{R_*}(0,0),\\
\label{eqh3}&& 2\ve^{-a}\Omega^{(m-1)}v=-\widehat{R_*}(0,0),\\
\label{eqh4}&& \ve^{-a}\omega\cdot \partial_{\theta}S_{2}(\theta, t)+\partial_{t}S_{2}(\theta, t)
+R_{**}(\theta,t)=\widehat{R_{**}}(0,0).
\end{eqnarray}

Let
\begin{equation}\label{eq81}\Omega^{(m)}=\Omega^{(m-1)}+\ve^{a}\widehat{R_{**}}(0,0),\end{equation}
%-----------------------------------
\begin{eqnarray}
\label{eq82}R_{+}&=& \ve^{-a}\left(\langle \Omega^{(m-1)}\partial_{\theta}S, \p_\theta S\rangle+2\langle \Omega^{(m-1)} \nu, \partial_{\theta}S\rangle\right)\\
\label{eq83}&&+\langle R_{1}^{(m-1)}, \nu+\partial_{\theta}S\rangle\\
\label{eq84}&& +\langle R_{2}^{(m-1)}(\nu+\partial_{\theta}S), \nu+\partial_{\theta}S\rangle+2\langle R_{2}^{(m-1)}(\nu+\partial_{\theta}S), \rho\rangle\\
\label{eq85}&& +R_{high}(\theta, t, v+\rho+\partial_{\theta}S)-\frac{1}{2}\langle (\partial_{I}^{3}R_{high}(\theta, t, 0)\partial_{\theta}S_{0})\rho, \rho\rangle\\
\label{eq86}&& +2\ve^{-a}\langle \Omega^{(m-1)}\rho, \langle \partial_{\theta}S_{2}\rho, \rho\rangle\rangle.
\end{eqnarray}
%-----------------------------------
Then
\begin{equation}\label{eq87}
H^{(m)}=C+\ve^{-a}(\langle \omega, \rho\rangle+\ve^{-a}\langle \Omega^{(m)}\rho, \rho\rangle)+R_{+}.
\end{equation}

%%%%%%%%%%%%%%%%%%%%%%%%%%%%%%%%%%%%%%%%%%%%%%%%%%%%%%%%%%%%%%%%%%%%%%%%%%%%%%%%%%%%%%%%%%%%%%%%%%%%%%%%%%%%%
$\bullet$ Soluitons of the homological equations
\begin{itemize}
  \item [(1)] Solution to \eqref{eqh1}: Passing to Fourier coefficients,
$${S}_{0}(\theta,t)=\sum_{(0,0)\neq(k,l)\in\mathbb Z^d\times\mathbb Z}\frac{\i}{\ve^{-a}\langle k,\omega\rangle+l}\widehat{R^{(m-1)}_{0}}(k,l)e^{\i(\langle k,\theta\rangle+lt)}.$$
By $\omega\in D.C.,$
\begin{equation}\label{eq91}
\parallel S_{0}\parallel_{s_{m-1}^{1}}\leq \frac{C}{(s_{m-1}-s_{m-1}^{1})^{2d+2}}\frac{1}{\gamma}\parallel R_{0}^{(m-1)}\parallel_{s_{m-1}}\leq \e^{1-}_{m-1}\ve^{-b(4d+4)}.
\end{equation}
%-------(Note that{\color{red} $\gamma=\varepsilon^{c_{0}}.$???} We have $\frac{\varepsilon_{m-1}}{\gamma}=\varepsilon^{1-}_{m-1}$, $``1-"$ means $1$ minus a very small number.)
%---------------------------------------------------------------------
  \item [(2)] Solution to \eqref{eqh2}: By \eqref{eq65}, \eqref{327-1} and \eqref{eq91}  with $j=m-1$, and using Cauchy's estimate, we have
\be R_{*}=O_{s_{m-1}^2}(\e_{m-1}^{1-}\ve^{-b(4d+6)-a}).\label{327-3}\ee

By \eqref{eqh2}, we have $${S}_{1}(\theta,t)=\sum_{(0,0)\neq(k,l)\in\mathbb Z^d\times\mathbb Z}\frac{\i}{\ve^{-a}\langle k,\omega\rangle+l}\widehat{R_{*}}(k,l)e^{\i(\langle k,\theta\rangle+lt)}.$$
Therefore
\begin{equation}\label{eq92}
\parallel S_{1}\parallel_{s_{m-1}^{3}}\leq \frac{C}{(s_{m-1}^{2}-s_{m-1}^{3})^{2d+2}}\frac{1}{\gamma}\parallel R_{*}\parallel_{s_{m-1}^{2}}\leq \e^{1-}_{m-1}\ve^{-b(8d+10)-a}.
\end{equation}
%---------------------------------------------------------------------
  \item [(3)] Solution to \eqref{eqh3}: By \eqref{eqh3} and \eqref{327-3},
\begin{equation}\label{eq93}
|\nu|=\left|(2\ve^{-a}\Omega^{(m-1)})^{-1}\widehat{R_*}(0,0) \right|\le C \e_{m-1}^{1-}\ve^{-b(4d+6)}.
\end{equation}
%--------------------------------------------------
 \item [(4)] Similarly, by \eqref{327-2}, \eqref{eq91}, \eqref{eq92}  and Cauchy's estimate, we have
$$\parallel R_{**}(\theta,t)\parallel_{s^{4}_{m-1}}\leq C \e^{1-}_{m-1}\ve^{-b(8d+12)-2a}.$$
Passing \eqref{eqh4} to Fourier coefficients,
$${S}_{2}(\theta,t)=\sum_{(0,0)\neq(k,l)\in\mathbb Z^d\times\mathbb Z}\frac{\i}{\ve^{-a}\langle k,\omega\rangle+l}\widehat{R_{**}}(k,l)e^{\i(\langle k,\theta\rangle+lt)}.$$
It follows
%---------------------------------------
\begin{equation}\label{eq95}
\parallel S_{2}\parallel_{s_{m-1}^{5}}\leq \frac{C}{(s_{m-1}^{4}-s_{m-1}^{5})^{2d+2}}\frac{1}{\gamma}\parallel R_{**}\parallel_{s_{m-1}^{4}}\leq \e^{1-}_{m-1}\ve^{-b(12d+16)-2a}.
\end{equation}
%----------------------------------------
Now
%---------------------------------------
\begin{equation}\label{eq96}
|\Omega^{(m)}-\Omega^{(m-1)}|=\ve^{a}|\widehat{R_{**}}(0, 0)|\leq \ve^{-b^\prime}\e_{m-1}.
\end{equation}
This verifies \eqref{eq62} in the Iterative Lemma.
%----------------------------------------
By \eqref{eq91}, \eqref{eq92} and \eqref{eq95}, we have
%-----------------------------------------
\begin{equation}\label{eq100}
\parallel S\parallel_{s_{m-1}^{5}, r_{m-1}}=\parallel S_{0}+\langle S_{1}, \rho\rangle+\langle S_{2}\rho, \rho\rangle\parallel_{s_{m-1}^{5}, r_{m-1}}\leq C\ve^{-\frac{3b^\prime}{2}}\e_{m-1}.
\end{equation}
%-------------------------------------
By Cauchy's estimate,
\begin{equation}\label{eq101}
\parallel \sum_{|\alpha|+|\beta|\leq 2}\partial_{\theta}^{\alpha}\partial_{\rho}^{\beta}S\parallel_{s_{m-1}^{6}, r_{m-1}^{1}}\leq C\ve^{-2b^\prime}\e_{m-1}.
\end{equation}
By using the implicit theorem, we have that there exist $u=u_{m}(\phi, t, \rho),$ $v=v_{m}(\phi, t, \rho)$ with
%---------------------------------------
\begin{equation}\label{eq102}
\parallel u\parallel_{s_{m-1}^{7}, r_{m-1}^{7}}, \parallel v\parallel_{s_{m-1}^{7}, r_{m-1}^{7}}\leq C\ve^{-3b^\prime}\e_{m-1}
\end{equation}
%--------------------------------------
such that $\Psi=\Psi_{m}$
\begin{equation}\label{eq103}
\Psi_{m}: \left\{
              \begin{array}{ll}
                I=v+\rho+u(\phi, t, \rho),\\
                \theta=\phi+v(\phi, t, \rho),\\ t=t
              \end{array}
            \right.
 (\phi, t, \rho)\in \mathbb{T}_{s_{m-1}^{7}}^{d+1}\times B(r_{m-1}^{7})
\end{equation} is well-defined and
$$\Psi_{m}(\mathbb{T}_{s_{m}}^{d+1}\times B(r_{m}))\subset \mathbb{T}_{s_{m-1}}^{d+1}\times B(r_{m-1}).$$
This verifies \eqref{eq66y} and \eqref{66++}.
\end{itemize}
%-----------------------------------------------------
We are now in position to estimate the new perturbation $R_{+}.$  By \eqref{eq93}, \eqref{eq101}, \eqref{eq102} and \eqref{eq103},
we have
\begin{eqnarray}\label{eq111}
\parallel \eqref{eq82}\parallel_{s_{m-1}^7,r_{m-1}^7} &\leq &C\ve^{-a}(\ve^{-3b^\prime}\e_{m-1}^{1-})^{2}\leq C\ve^{-a-6b^\prime}\e_{m-1}^{2}\left( \log\frac1\ve\right)^C
\le\e_m \ve^{4b^\prime}.
\end{eqnarray}
Similarly,
%------------------------------
\begin{equation}\label{eq112}
\parallel \eqref{eq83}\parallel_{s_{m-1}^7,r_{m-1}^7}\leq \e_{m}\ve^{4b^\prime},
\end{equation}
%-----------------------------
\begin{equation}\label{eq113}
\parallel \eqref{eq84}\parallel_{s_{m-1}^7,r_{m-1}^7}\leq \e_{m}\ve^{4b^\prime}.
\end{equation}
%-----------------------------
Clearly,
\begin{equation}\label{eq114}
\eqref{eq86}=O_{s_{m-1}^7,r_{m-1}^7}(\ve^{-4b^\prime}\e_{m-1}^{1-}|\rho|^{3})=O_{s_{m-1}^7,r_{m-1}^7}(|\rho|^{3}).
\end{equation}
Recall $R_{high}=O_{s_{m-1},r_{m-1}}(\ve^{-(a+6b)}|I|^{3}).$ % By \eqref{eq100},
%\[||S||_{s_{m-1}^6,r_{m-1}^6}\le C \ve^{-2a}\e_{m-1}^{1-}\le  C\ve^{-2a}\e_{m-1}\left(\log\frac1\ve\right)^C<\ve^{-2a}\e_{m-1}^{1-}<\e_{m-1}^{99/100}.\]
%------------------
Then by Taylor formula and in view of \eqref{eq100}, we have
\begin{eqnarray}
\eqref{eq85}&=&R_{high}(\theta, t, \rho+\partial_{\theta} S)-\langle (\partial_{I}^{3}R_{high}(\theta, t, 0)\partial_{\theta}S_{0})\rho, \rho\rangle\nonumber\\
&=& \sum_{|\alpha|\ge3,\alpha\in\mathbb Z^d_+}\frac{1}{\alpha!}\, \left((\rho+\p_\theta\, S)\cdot \p_I \right)^{\alpha}R_{high}(\theta,t,0) -\frac{1}{2}\langle (\partial_{I}^{3}R_{high}(\theta, t, 0)\partial_{\theta}S_{0})\rho, \rho\rangle \nonumber\\
&=& O_{s_{m-1}^7,r_{m-1}^7}(\ve^{-(a+6b)} |\rho|^{3})
+O_{s_{m-1}^7,r_{m-1}^7}(\e_m \ve^{3 b^\prime}).\label{327-9}
\end{eqnarray}
Consequently, by \eqref{eq111}-\eqref{327-9}, we have
\be\label{327-10}R_+(\phi,t,\rho)=O_{s_{m-1}^7,r_{m-1}^7}(\ve^{-(a+6b)} |\rho|^{3})+O_{s_{m-1}^7,r_{m-1}^7}(\e_m \ve^{3b^\prime}). \ee
By developing $R_+(\phi,t,\rho)$ into Taylor formula of order $3$ in $\rho$, we can re-write
\[R_+(\phi,t,\rho)=R^{(m)}_{low}(\phi,t,\rho)+R^{(m)}_{high}(\phi,t,\rho)\]
where
$$R^{(m)}_{low}=R_{0}^{(m)}(\phi, t)+\langle R_{1}^{(m)}(\phi, t), \rho\rangle+\langle R_{2}^{(m)}(\phi, t)\,\rho,\rho\rangle, $$
 and $R_{p}^{(m)}(\phi, t)=O_{s_m,r_m}(\e_m)$ ($p=0,1,2$) and $R^{(m)}_{high}(\phi,t,\rho) =O_{s_{m}, r_{m}}(\ve^{-(a+6b)}|\rho|^{3})$.
This verifies  \eqref{eq64}-\eqref{eq66} with $j=m$. As for the functions $R^{(m)}_{low}(\phi,t,\rho)$ and $R^{(m)}_{high}(\phi,t,\rho)$ is real for the real argument $(\phi,t,\rho)\in \mathbb T^{d+1}_{s_{m}}\times B(r_m)$, the proof is the same as that in the proof of Lemma \ref{lemma2.1}.
This completes the proof of the Iterative Lemma.
\end{proof}
%%%%%%%%%%%%%%%%%%%%%%%%%%%%%%%%%%%%%%%%%%%%%%%%%%%%%%%%%%%%%%%%%%%%%%%%%%%%%%
\section{Proof of the extended Kolmogorov's Theorem}
Let $\Psi(x, I)=\lim_{m\rightarrow \infty}\Psi_{1}\circ \cdots\circ\Psi_{m}(x,I), (x, I)\in \mathbb{T}^{d+1}_{s_{0}/2}\times B(r_{0}/2).$
Recall that in the Iterative Lemma, we have proved:
\begin{itemize}
  \item [(i)] $\mathbb{T}_{s_{0}}^{d+1}\times B(r_{0})\supset \cdots\supset \mathbb{T}_{s_{j}}^{d+1}\times B(r_{j})\supset \cdots\supset \mathbb{T}_{s_{0}/2}^{d+1}\times B(r_{0}/2),$
  \item [(ii)] $\Psi_{j}: \mathbb{T}^{d+1}_{s_{j}}\times B(r_{j})\rightarrow \mathbb{T}^{d+1}_{s_{j-1}}\times B(r_{j-1})$ is real analytic.
\end{itemize}
Then $\Psi^{(m)}:=\Psi_{1}\circ \cdots\circ\Psi_{m}: \mathbb{T}_{s_{0}/2}^{d+1}\times B(r_{0}/2)\subset \mathbb{T}_{s_{m}}^{d+1}\times B(r_{m})\rightarrow \mathbb{T}_{s_{0}}^{d+1}\times B(r_{0})$ is well-defined and analytic in $\mathbb{T}_{s_m}^{d+1}\times B(r_m).$
Also by \eqref{eq102} and \eqref{eq103} in iterative lemma, we have
\begin{equation}\label{eq201}
\parallel (\partial\Psi_{j}(x, I)-Id)(z)\parallel_{s_j, r_j}\leq \ve^{-4b^\prime}\epsilon_{j-1}\parallel z\parallel,
\end{equation}
for $\forall z\in T_{(x, I)}(\mathbb{T}_{s_j}^{d+1}\times B(r_j))=\mathbb{C}^{d+1}\times \mathbb{C}^{d},$ where $\partial\Psi_{j}$ is the tangent map of $\Psi_{j},$
$T_{(x, I)}(\mathbb{T}_{s_j}^{d+1}\times B(r_j))$ is the tangent space at point $(x, I)\in \mathbb{T}_{s_j}^{d+1}\times B(r_j).$
Observe that
\begin{equation}\label{eq202}
\partial\Psi^{(m)}=(\partial\Psi_{1}\circ \Psi_{2}\circ \cdots\circ \Psi_{m})(\partial\Psi_{2}\circ \Psi_{3}\circ\cdots\circ\Psi_{m})\cdots(\partial\Psi_{m}).
\end{equation}
Then by \eqref{eq201},
\begin{equation}\label{eq203}
\parallel \partial\Psi^{(m)}(x, I)z\parallel_{s_m, r_m}\leq \Pi_{j=1}^{m}(1+\e_{j-1}\ve^{-4b^\prime})\parallel z\parallel
\leq \Pi_{j=1}^{m}(1+\frac{1}{2^{j}})\parallel z\parallel\leq 2\parallel z\parallel, \forall z\in \mathbb{C}^{d+1}\times \mathbb{C}^{d}.
\end{equation}
Thus, for $w=(x, I)\in \mathbb{T}^{d+1}_{s_{0}/2}\times B(r_{0}/2),$
\begin{eqnarray}
&&|\Psi^{(m+1)}(w)-\Psi^{(m)}(w)|\nonumber\\
&=&|\Psi^{(m)}(\Psi_{m+1}(w))-\Psi^{(m)}(w)|\nonumber\\
\label{eq204}&\leq &\parallel \partial \Psi^{(m)}(\Psi_{m+1}(w))\parallel_{s_{0}/2, r_{0}/2}\sup_{w\in \mathbb{T}^{d+1}_{s_{0}/2}\times B(r_{0}/2)}
\parallel \Psi_{m+1}(w)-w\parallel\\
&\leq & 2\ve^{-4b^\prime}\e_{m}\leq \e_{m}^{1/2},\nonumber
\end{eqnarray}
where we have used \eqref{eq66y}, \eqref{eq93} and \eqref{eq203} in \eqref{eq204}.
Write $\Psi^{(m)}=\Psi^{(1)}+\sum_{l=2}^{m}(\Psi^{(l)}-\Psi^{(l-1)}), $ here $\Psi^{(1)}:=\Psi_{1}.$ Then
\begin{eqnarray*}
\parallel \Psi(w)-w\parallel_{s_{0}/2, r_{0}/2}
&\leq &\parallel \Psi_{1}(w)-w\parallel_{s_{0}/2, r_{0}/2}+\sum_{l=2}^{\infty}\parallel \Psi^{(l)}(w)-\Psi^{(l-1)}(w)\parallel_{s_{0}/2, r_{0}/2}\le \sum_{l=0}^{\infty}\e_{l}^{1/2}\leq  \ve.
\end{eqnarray*}
This completes the proof of theorem.
%%%%%%%%%%%%%%%%%%%%%%%%%%%%%%%%%%%%%%%%%%%%%%%%%%%%%%%%%%%%%%%%%%%%%%%%%%%%%%%%%%%%%%%%%5
\section{Application to coupled Duffing oscillators}\label{section5}
Consider the coupled Duffing oscillators
\begin{equation}\label{eq5.1}
\ddot{x_{j}}+x_{j}^{2n+1}+\frac{\partial F(x, t)}{\partial x_{j}}=0, \;\;j=1, 2, \cdots, m, x=(x_1,...,x_m)
\end{equation}
where $m, n>0$ are fixed integers, and
%---------------------------------------------
\begin{equation}\label{eq5.2}
F(x, t)=\sum_{0\leq |\alpha|\leq 2n+1}P_{\alpha}(t)x^{\alpha}=\sum_{0\leq \alpha_{1}+\cdots+\alpha_{m}\leq 2n+1}P_{\alpha_{1}\cdots\alpha_{m}}(t)x_{1}^{\alpha_{1}}\cdots x_{m}^{\alpha_{m}}, \alpha_{j}\in\mathbb{Z}_{+}.
\end{equation}
%---------------------------------------------
and $P_{\alpha}(t): \mathbb{T}\rightarrow \mathbb{R}$ is analytic (in this section, $\mathbb T:=\mathbb R/\mathbb Z$).

Let $\widetilde{A}$ be a large constant. Replacing $x_{j}$ by $\widetilde{A}x_{j}$ in \eqref{eq5.1}, we get
$$\widetilde{A}\ddot{x}_{j}+\widetilde{A}^{2n+1}x_{j}^{2n+1}+\widetilde{A}^{-1}\frac{\partial F(\widetilde{A}x, t)}{\partial x_{j}}=0,$$
that is,
%-------------------------------------
\begin{equation}\label{eq5.4}
\ddot{x_{j}}+\widetilde A^{2n}x_{j}^{2n+1}+\widetilde{A}^{-2}\frac{\partial F(\widetilde{A}x, t)}{\partial x_{j}}=0, j=1, 2, \cdots,m.
\end{equation}
%--------------------------------------
Let
$$y_{j}=\widetilde A^{-n}\dot{x}_{j}, j=1, 2, \cdots, m.$$
Then
$$\dot{y_{j}}=\widetilde{A}^{-n}\ddot{x_{j}}=-\widetilde{A}^{n}x^{2n+1}_{j}-\widetilde{A}^{-(n+2)}\frac{\partial F(\widetilde{A}x, t)}{\partial x_{j}}.$$
Thus, \eqref{eq5.1} can be written as a Hamiltonian system
%-------------------------------------------------
\begin{equation}\label{eq5.5}
\dot{x_{j}}=\frac{\partial H}{\partial y_{j}}, \dot{y_{j}}=-\frac{\partial H}{\partial x_{j}}, j=1, 2, \cdots, m,
\end{equation}
%-------------------------------------------------
where
\begin{equation}\label{eq5.6}
H= \widetilde{A}^{n}\sum_{j=1}^{m}\left(\frac{1}{2}y_{j}^{2}+\frac{1}{2(n+1)}x_{j}^{2n+2}\right)+\widetilde{A}^{-(n+2)}F(\widetilde{A}x, t).
\end{equation}
Consider an auxiliary Hamiltonian system which is autonomous:
%---------------------------------------------------------------
\begin{equation}\label{eq5.7}
\dot{x}=\frac{\partial H_{0}}{\partial y}, \dot{y}=-\frac{\partial H_{0}}{\partial x}, H_{0}=\frac{1}{2}y^{2}+\frac{1}{2(n+1)}x^{2n+2}, (x, y)\in \mathbb{R}^{2}.
\end{equation}
%-------------------------------------------------------------------
Let $(x, y)=(u_{0}(t), v_{0}(t))$ be the solution to \eqref{eq5.7} with initial values $(u_{0}(0),v_{0}(0))=(1, 0).$ Then this solution $(u_{0}, v_{0})$ is clearly periodic.
Let $T_{0}$ be its minimal period. By energy conservation, we have
%---------------------------------------------------------------
\begin{equation}\label{eq5.8}
(n+1)v^{2}_{0}(t)+u_{0}^{2n+2}(t)=1, \forall t\in \mathbb{R}.
\end{equation}
%-------------------------------------------------------------------
Let
$$ \left\{
     \begin{array}{ll}
       x=c^{\alpha}I^{\alpha} u_{0}(\theta T_{0}),\\
       y=c^{\beta}I^{\beta}v_{0}(\theta T_{0}),
     \end{array}
   \right.
(\theta, I)\in \mathbb{T}\times \mathbb{R}_+,$$
where $\alpha=\frac{1}{n+2}, \beta=1-\alpha, c=\frac{1}{\alpha T_{0}}.$ It is easy to check $\det \frac{\partial(x, y)}{\partial(\theta, I)}=1.$ That is,
$d x\wedge d y=d\theta\wedge d I.$ Thus
%---------------------------------------------------------------
\begin{equation}\label{eq5.9}
\Phi: \left\{
        \begin{array}{ll}
          x_{j}=c^{\alpha}I^{\alpha} _{j}u_{0}(\theta_{j}T_{0}),\\
          y_{j}=c^{\beta}I^{\beta} _{j}v_{0}(\theta_{j}T_{0}),
        \end{array}
      \right.
j=1, 2, \cdots,m.
\end{equation}
%-------------------------------------------------------------------
is well-defined and symplectic transformation $\Phi: (\theta, I)\in \mathbb{T}^{m}\times \mathbb{R}_+^{m}\rightarrow \mathbb{R}^{m}\times \mathbb{R}^{m}.$
Moreover,
$$H^{(1)}:=H\circ \Phi =\widetilde{H}_{0}(I)+\widetilde{R}(\theta, t, I),$$
where
$$\widetilde{H}_{0}(I)=\frac{c^{2\beta}}{2(n+1)}\widetilde{A}^{n}\sum_{j=1}^{m}I_{j}^{\frac{2(n+1)}{n+2}},$$
$$ \widetilde{R}(\theta, t, I)=\widetilde{A}^{-(n+2)}F(\widetilde{A}c^{\alpha}I_{1}^{\alpha}u_{0}(\theta_{1}T_{0}), \cdots, \widetilde{A}c^{\alpha}I_{m}^{\alpha}u_{0}(\theta_{m}T_{0}), t).$$
Note that $F$ is a polynomial in $x\in \mathbb{R}^{m}$ with degree $2n+1.$ So $\widetilde{R}(\theta, t, I)$ is a polynomial of $\widetilde A$ of degree $n-1$ with its coefficients analytically depending on  $(\theta, t, I)\in \mathbb{T}^{m+1}\times [1, 2].$ Let $\ve=\widetilde{A}^{-1}, a=n, b=n-1,$ and
$$H_{0}(I)=\frac{c^{2\beta}}{2(n+1)}\sum_{j=1}^{m}I_{j}^{\frac{2(n+1)}{n+2}}, R(\theta, t, I)=\ve^{b}\widetilde{R}(\theta, t, I).$$
Then
$$H^{(1)}=\ve^{-a}H_{0}(I)+\ve^{-b}R(\theta, t, I).$$ where $R(\theta, t, I)$ depends on $\ve$. Note that it is harmless not to write explicitly the dependence  of $R$ on $\ve$.
Applying Theorem \ref{thm01} and Remark \ref{RMK2} to $H^{(1)}$, we have that there exists a subset $\tilde{J}_{0}(\widetilde A)\subset [1, 2]^{m}$ with $\text{Leb }\,\tilde{J}_{0}(\widetilde A)\geq 1-(\log\widetilde  A)^{-c_{0}}$ (some $c_{0}>0$) such that for any solution to
\eqref{eq5.5} starting from
 $I(0)=(I_1(0),\cdots,I_m(0))\in \tilde{J}_{0}(\widetilde A)$ is quasi-periodic with rotational frequency $(\omega(I(0)), 1)$,
 where $\omega=(\omega_j:\, j=1,...,m)$ and $\omega_{j}=\ve^{-a}\frac{\partial H_{0}(I(0))}{\partial I_{j}}.$ Returning to the coordinates $(x_i,\dot x_i: i=1,...,m)$, we have that for any large $\widetilde A$ there exist sets $\Theta_{\widetilde A}$ and $\tilde\Theta_{\widetilde A}$ with
 $\Theta_{\widetilde A}\subset\tilde\Theta_{\widetilde A}\subset[C_1\widetilde A,C_2\widetilde A]^{2m}$
 and
 $\text{Leb}\,\tilde\Theta_{\widetilde A}=1,\; \text{Leb}\,\Theta_{\widetilde A}\ge 1-\left(\log \widetilde A\right)^{-C_0}$
 such that for any solution to
\eqref{eq5.1} starting from  $(x_j(0),\dot x_j(0): j=1,...,m)\in \Theta_{\widetilde A}$  is quasi-periodic with rotational frequency $(\omega(I(0)), 1)$. Note that any quasi-periodic solution is bounded. Also note that
$$\lim_{\widetilde A\to\infty}\frac{\tilde \Theta_{\widetilde A}}{\Theta_{\widetilde A}}=1$$
Thus, we see that \eqref{eq5.1} is almost Lagrangian stability. This completes the proof of Theorem \ref{thm02}.
%%%%%%%%%%%%%%%%%%%%%%%%%%%%%%%%%%%%%%%%%%%%%%

%%%%%%%%%%%%%%%%%%%%%%%%%%%%%%%%%%%%%%%%%%%%%%%%%%%%%%%%%%%%%%%%%%%%%%%%%%%%%%%%%%%%%%%%%%%%%%%%%%%%%%%%%%%%%%%%%%%%%%%%%%%
%\section{Conclusions}
%%%%%%%%%%%%%%%%%%%%%%%%%%%%%%%%%%%%%%%%%%%%%%%%%%%%%%%%%%%%%%%%%%%%%
\iffalse\section*{Competing interests}
\noindent The authors
declare that there is no conflict of interests regarding the
publication of this article.
%%%%%%%%%%%%%%%%%%%%%%%%%%%%%%%%%%%%%%%%%%%%%%%%%%%%%%%%%%%%%%%%%%%%%%%%%%
\section*{Authors's contributions}
\noindent All authors read and approved the final manuscript.
\fi
%%%%%%%%%%%%%%%%%%%%%%%%%%%%%%%%%%%%%%%%%%%%%%%%%%%%%%%%%%%%%%%%%%%%%%%%%%%%
\section*{Acknowledgements}
\noindent
%%%%%%%%%%%%%%%%%%%%%%%%%

%\vskip20pt

{\it The work was supported in part by National Natural Science Foundation of China
(Nos. 11771093, 12071254).}

%%%%%%%%%%%%%%%%%%%%%%%%%%%%%%%%%%%%%%%%%%%%%%%%%%%%%%%%%%%%%%%%%%%%%%%%%%

\end{document}